\documentclass{amsart}
\usepackage{amscd, amssymb, euscript}

% Arxiv v3
%--------------------------- numeration of parts of theorems
\newcounter{noindnum}[subsection]
\setcounter{noindnum}{0}
\newcommand{\noindstep}{\refstepcounter{noindnum}{\rm(}\alph{noindnum}\/{\rm)
}}
%----------------------------------
\renewcommand{\phi}{\varphi}
\renewcommand{\epsilon}{\varepsilon}

\newcommand{\Dmod}{\EuScript D}

\newcommand{\B}{\mathcal B}
\newcommand{\E}{\mathcal{E}}
\newcommand{\F}{\mathcal{F}}
\newcommand{\J}{\mathcal J}
\newcommand{\cL}{\mathcal L}
\newcommand{\U}{\mathcal{U}}
\newcommand{\V}{\mathcal{V}}
\newcommand{\X}{\mathcal X}

\newcommand{\C}{\mathbb C}
\newcommand{\Pro}{\mathbb P}
\newcommand{\Z}{\mathbb{Z}}

\newcommand{\smcn}{{\rm;}}
\newcommand{\vac}{|0\rangle}

\DeclareMathOperator{\Res}{Res}

\DeclareMathOperator{\Lie}{Lie}
\DeclareMathOperator{\ad}{ad}
\DeclareMathOperator{\Ad}{Ad}
\DeclareMathOperator{\Fun}{Fun}
\DeclareMathOperator{\rk}{rk}
\DeclareMathOperator{\Hom}{Hom}
\DeclareMathOperator{\End}{End}
\DeclareMathOperator{\Ind}{Ind}
\DeclareMathOperator{\tr}{tr}
\DeclareMathOperator{\Sym}{Sym}

\DeclareMathOperator{\gr}{gr}

\newcommand{\conn}{\mathop{\mathbf{Conn}}\nolimits}
\newcommand{\Conn}{\mathcal{C}onn_{N(0)+(\infty)}^\fh}
\newcommand{\ConnReg}{\underline{\mathcal{C}onn}_{N(0)+(\infty)}^\fh}
\newcommand{\Bun}{\mathcal{B}un_{N(0)+(\infty)}^\fh}
\newcommand{\MM}{\mathbb{M}_{N,k}}
\newcommand{\MMR}{\mathbb{M}_{N,k,reg}}
\newcommand{\fgp}{{\hat\fg_+}}
\newcommand{\pireg}[1]{{\hat\pi^{#1}_{N,reg}}}
\newcommand{\fhp}{{\hat\fh_+}}

\newcommand{\fh}{\mathfrak h}
\newcommand{\fg}{\mathfrak g}
\newcommand{\fu}{\mathfrak u}
\newcommand{\fb}{\mathfrak b}
\newcommand{\fsl}{\mathfrak{sl}}

\theoremstyle{plain}
\newtheorem{theorem}{Theorem}
\newtheorem{proposition}{Proposition}[section]
\newtheorem{lemma}{Lemma}[section]
\newtheorem*{corollary}{Corollary}

\theoremstyle{definition}

\theoremstyle{remark}
\newtheorem*{remark}{Remark}
\newtheorem*{remarks}{Remarks}

\author{Roman M. Fedorov}
\title[Irregular Wakimoto modules]{Irregular Wakimoto modules\\
and the Casimir connection}
\email{fedorov@bu.edu}
\address{Mathematics \& Statistics\\
Boston University\\
111 Cummington St\\
Boston, MA}

%-----TITLE-------------------------------
%-----------------------------------------
\begin{document}

\begin{abstract}
We study some non-highest weight modules over an affine
Kac--Moody algebra $\hat\fg$ at non-critical level. Roughly
speaking, these modules are non-commutative localizations of
some non-highest weight ``vacuum'' modules. Using free field
realization, we embed some rings of differential operators in
endomorphism rings of our modules.

These rings of differential operators act on a localization of
the space of coinvariants of any $\hat\fg$-module with respect
to a certain \emph{level subalgebra}. In a particular case this
action is identified with the Casimir connection.
\end{abstract}

\maketitle

\section{Introduction}
Let $\fg$ be a simple Lie algebra. Consider the affine
Kac--Moody algebra $\hat\fg$ that is a non-split 1-dimensional
central extension of the loop algebra $\fg\otimes\C((t))$. It
is well known that such an extension is unique up to an
isomorphism. Denote a generator of the center by $\mathbf{1}$.
Let us set
$\U_k\hat\fg=\U\hat\fg/((k-\mathbf{1})\cdot\U\hat\fg)$, where
$\U\hat\fg$ is the universal enveloping algebra.

Representations of Kac--Moody algebras have been studied for a
few decades, see for example~\cite{Kac}. However, the study was
mostly concerned with \emph{highest weight representations}. In
particular, these representations have the property that
$t\fg[[t]]$ acts locally nilpotently. One of the simplest (and
most important) examples of such a representation is the
so-called vacuum module
\[
    \U_k\hat\fg\otimes_{\U\fg[[t]]}\C=
    \U_k\hat\fg/(\U_k\hat\fg\cdot\fg[[t]]).
\]
It is well known that the ring of endomorphisms of this module
is equal to $\C$ for all but one value of $k$, this
exceptional $k$ is called the \emph{critical level},
see~\cite[Ch.~3]{FrenkelBook}.

In this paper we consider a more general class of
representations, called \emph{smooth} representations,
i.e., representations in which every vector is annihilated by
$t^N\fg[[t]]$ for large enough $N$. An example of such
representation is
\[
    \U_k\hat\fg\otimes_{\U(t^N\fg[[t]])}\C=
    \U_k\hat\fg/(\U_k\hat\fg\cdot t^N\fg[[t]]),
\]
where $N$ is a positive integer. We would like to calculate the
ring of endomorphisms, however we shall have to make a few
simplifications. Firstly, instead of $t^N\fg[[t]]$, we shall be
using $t^N(\fu_-\oplus\fu)\oplus t^{N+1}\fg[[t]]$, where
$\fg=\fu_-\oplus\fh\oplus\fu$ is a triangular decomposition of
$\fg$. Secondly, we shall consider a certain
\emph{non-commutative localization} of the corresponding vacuum
module, see~\S\ref{NONHIGHEST}. The endomorphisms of this
localization can be promoted to those of the original module by
``clearing denominators'', but we do not know how to
characterize those endomorphisms that have ``no denominators'',
see the corollary of Proposition~\ref{PrEnd}.

We shall provide the conjectural answer for the ring of
endomorphisms: this is essentially the ring of differential
operators on a certain complement of hyperplanes arrangement.
However, we shall only prove that the latter ring injects into
the former, see Theorem~\ref{ThEnd}.

We shall use this injection to construct a natural functor $\F$
from the category of representations of $\hat\fg$ to the
category of $\Dmod$-modules on the complement of hyperplanes.

\subsection{Irregular Wakimoto modules}
To construct the above injection we shall use free field
realization. We shall define some irregular analogues of
Wakimoto modules. Then we shall show that, after localization,
our modules are isomorphic to the corresponding irregular
Wakimoto modules. The sought-after endomorphisms are
transparent on the Wakimoto side. This is somewhat analogous to
the construction of central elements in the vertex algebra,
corresponding to $\fg$ on the critical level,
cf.~\cite{FrenkelBook}.

\subsection{Relation to the Casimir connection}
Let $V$ be a $\fg$-module, $\fh^r\subset\fg$ be the regular
part of a Cartan subalgebra; let us view $V\times\fh^r$ as a
trivial vector bundle on $\fh^r$. A certain connection on this
bundle has been constructed independently by De~Concini
(unpublished), Felder--Markov--Tarasov--Varchenko~\cite{FMTV,
TarasovVarchenko}, and Milson--Toledano
Laredo~\cite{MilsonToledanoLaredo}.
Following~\cite{ToledanoLaredo2}, we shall call it the Casimir
connection.

The main feature of this connection is that its monodromy gives
\emph{the quantum Weyl group action}, as was recently
proved by Toledano Laredo in~\cite{ToledanoLaredo2}.

It turns out that if one takes $N=1$ and restricts our functor
$\F$ to certain \linebreak $\hat\fg$-modules, induced from
$\fg$-modules, then $\F$ coincides with the Casimir connection up
to a~twist, see~\S\ref{CASIMIR}.

\subsection{Relation to results of D.~Ben~Zvi and E.~Frenkel}
The representation theory of $\hat\fg$ is closely related to
numerous structures on moduli spaces of principal bundles on
complex curves. In particular, using $\hat\fg$ and the Virasoro
algebra, E.~Frenkel and D.~Ben--Zvi have constructed
in~\cite{FrenkelBenZvi} the following deformation-degeneration
picture for a smooth projective curve:
\[
\begin{CD}
\mbox{KZB (or Heat) connection} @>>>
\mbox{\parbox{3cm}{Quadratic part of\\ Beilinson--Drinfeld\\
system (Opers)}}\\
@VVV @VVV\\
\mbox{Isomonodromic Deformation} @>>>
\mbox{\parbox{3cm}{Quadratic part
of\\Hitchin System}}
\end{CD}
\]
This also works in a ramified case. However, in the case when
the ramification divisor has multiple points (we shall call
this case \emph{irregular}), a new direction arises, which was
missing in~\cite{FrenkelBenZvi}. For isomonodromic deformation
this direction was put in the framework of~\cite{FrenkelBenZvi}
in~\cite{FedorovIsoStokes}.

According to the above picture, the quasi-classical limit of
KZB operators gives isomonodromic hamiltonians. As will be
briefly explained in~\S\ref{ISOMONODROMY}, the quasi-classical
limit of operators, coming from Theorem~\ref{ThEnd}, gives
hamiltonians for irregular direction of isomonodromy. Thus one
could think about these operators as about ``irregular
direction of KZB''. We hope to return to this point elsewhere.
Note also, that these irregular directions are in a sense
local, so that for every curve the picture has a global version
(cf.~\S\ref{GLOBAL} and~\S\ref{ISOMONODROMY}).

We would like to remark, that the irregular opers were studied
in~\cite{FeiginFrenkelLaredo}. We expect that the limits of our
operators as the level tends to the critical one, should be
related to irregular opers. Note also, that the quasi-classical
limit was identified with isomonodromic deformation
in~\cite{Boalch}.

\subsection{Acknowledgments}
The author wants to thank Dima Arinkin, David Ben--Zvi, Roman
Bez\-ru\-kav\-ni\-kov, Pasha Etingof, Edward Frenkel, Dennis
Gaitsgory, Victor Ginzburg, Valerio Toledano Laredo, Matthew
Szczesny, and Alexander Varchen\-ko for valuable discussions.
The author would like to thank the referee for valuable
comments. The idea to look at the action of endomorphisms on
the spaces of coinvariants belongs to Pasha Etingof. I have
also learnt recently that a similar work is done in a paper in
preparation by Boris Feigin, Edward Frenkel, and Valerio
Toledano Laredo.

\section{Main results}
\subsection{Notation}
We are working over the field $\C$ of complex numbers. Let $G$
be a connected simple Lie group, $\fg$ its Lie algebra. Fix a
triangular decomposition $\fg=\fu_-\oplus\fh\oplus\fu$. Let
$U_-$ and $U$ be the maximal unipotent subgroups of $G$ with
$\Lie(U_-)=\fu_-$, $\Lie(U)=\fu$, let $B_-$ and $B$ be Borel
subgroups with $\Lie(B_-)=\fb_-:=\fh\oplus\fu_-$,
$\Lie(B)=\fb:=\fh\oplus\fu$. We denote the unit element in a
group by 1.

Let $\Delta\subset\fh^*$ be the root system,
$\Delta_+\subset\Delta$ be the set of positive roots. Denote by
$\alpha_1,\ldots,\alpha_{\rk\fg}$ the simple roots. For each
$\alpha\in\Delta_+$ fix an $\fsl_2$-triple
$(e_\alpha,f_\alpha,h_\alpha)$. We denote $e_{\alpha_i}$ by
$e_i$ and define $f_i$ and $h_i$ similarly.

Denote by $\fh^r$ the regular part of $\fh$, namely,
\[
    \fh^r:=\{h\in\fh:\forall\alpha\in\Delta\;\alpha(h)\ne0\}.
\]
Similarly, $\fh^{*,r}$ is the regular part of the dual space
\[
    \fh^{*,r}:=\{\chi\in\fh^*:{\forall\alpha\in\Delta}\;
    \chi(h_\alpha)\ne0\}.
\]
We define the invariant bilinear form on $\fg$ by
\[
(x,y)=\frac1{2h^\vee}\tr(\ad_x\ad_y),
\]
where $x,y\in\fg$, $h^\vee$ is the dual Coxeter number of
$\fg$, $\ad$ is the adjoint representation of $\fg$.

\subsubsection{The affine Kac--Moody algebra}
Let $\fg((t))=\fg\otimes\C((t))$ be the formal loop algebra of
$\fg$. Let $\hat\fg=\fg((t))\oplus\C\mathbf{1}$ be the affine
Kac--Moody algebra of $\fg$, here $\mathbf{1}$ is the standard
generator of the center. The Lie algebra structure is given by
\begin{equation}\label{affinekacmoody}
    [x_1\otimes g_1(t),x_2\otimes g_2(t)]=
    [x_1,x_2]\otimes(g_1(t)g_2(t))+
    (x_1,x_2)\Res_t(g_1dg_2)\mathbf{1}.
\end{equation}
We denote $e_\alpha\otimes t^n\in\hat\fg$ by $e_{\alpha,n}$ and
define $f_{\alpha,n}$, $h_{\alpha,n}$ similarly. We also set
$e_{i,n}:=e_{\alpha_i,n}$, $f_{i,n}:=f_{\alpha_i,n}$, and
$h_{i,n}:=h_{\alpha_i,n}$.

For a Lie algebra $\mathfrak a$ we denote its universal
enveloping algebra by $\U\mathfrak a$. For $k\in\C$ let
$\U_k\hat\fg$ be the quotient ring of $\U\hat\fg$ modulo the
ideal generated by $k-\mathbf{1}$. We call~$k$ \emph{the
level}. The representations of $\U_k\hat\fg$ are the same as
representations of $\hat\fg$ on which~$\mathbf{1}$ acts by $k$.
The critical level $k=-h^\vee$ is denoted by $k_c$.

To simplify notation we write $t^n\fu$ instead of $\fu\otimes
t^n\subset\hat\fg$, $t\fg[[t]]$ instead of
$\fg\otimes(t\C[[t]])$ etc.

For a scheme $Z$ we denote the set of functions on $Z$ by
$\Fun(Z)$. Thus if $Z$ is affine, then a quasi-coherent sheaf
on $Z$ is the same as a $\Fun(Z)$-module. We denote by $\Dmod(Z)$
the sheaf of differential operators on $Z$. If $Z$ is an affine
scheme, we shall abuse notation by denoting the ring of
differential operators by $\Dmod(Z)$ as well.

For a vector space $L$ we denote by $L^*$ the dual vector
space, and by $\Sym(L)$ its symmetric algebra. Thus
$\Sym(L)=\Fun(L^*)$.

\subsection{Non-highest weight modules}\label{NONHIGHEST}
Fix an integer $N\ge1$ and a level $k\ne k_c$. Let us introduce
the \emph{level subalgebra}
\[
    \fgp:=t^N(\fu_-\oplus\fu)\oplus t^{N+1}\fg[[t]]\subset
    \hat\fg.
\]
Let $\C_k$ be the $\fgp\oplus\C\mathbf{1}$-module on which
$\fgp$ acts by zero and $\mathbf{1}$ acts by $k$. Consider the
$\hat\fg$-module
\[
    \MM:=\Ind_{\fgp\oplus\C\mathbf{1}}^{\hat\fg}\C_k=
    \U_k\hat\fg\otimes_{\U\fgp}\C=
    \U_k\hat\fg/(\U_k\hat\fg\cdot\fgp).
\]
We denote the image of $1\in\U_k\hat\fg$ in $\MM$ by $\vac$ and
call it the \emph{vacuum vector}. Note that $\MM$ has a
universal property that
\[
    \Hom_{\U_k\hat\fg}(\MM,M)=M^\fgp.
\]
In particular, we have an identification of sets
$\End_{\hat\fg}(\MM)=(\MM)^\fgp$, given by $X\mapsto\vac\cdot
X$ (we view endomorphisms of left modules as acting on the
right). In particular we have
\begin{equation}\label{obvious}
    \U(t\fg[[t]]/\fgp)^{op}\subset\End_{\hat\fg}(\MM),
\end{equation}
where for a ring $A$ we denote by $A^{op}$ the same ring with
the opposite multiplication. It follows that we have a natural
inclusion $\Sym(t^N\fh)\hookrightarrow\End_{\hat\fg}(\MM)$. It
can be also described as a subring, generated by endomorphisms
$\vac\mapsto h_{i,N}\vac$.

Let $\MMR$ be the localization of $\MM$ given by
\begin{equation}\label{defmmr}
  \MMR:=\MM\bigotimes_{\Sym(t^N\fh)}\Fun(\fh^{*,r}),
\end{equation}
where we view $\Fun(\fh^{*,r})$ as a module over
$\Sym(t^N\fh)\approx\Fun(\fh^*)$.

\begin{remark}
Another way to define $\MMR$ is as follows: the action of
$\Sym(t^N\fh)\approx\Fun(\fh^*)$ makes $\MM$ a sheaf on
$\fh^*$, and $\MMR$ is the restriction of $\MM$ to~$\fh^{*,r}$.
\end{remark}

\begin{proposition}\label{PrEnd}
\begin{equation*}
    \End_{\hat\fg}(\MMR)=(\MMR)^\fgp.
\end{equation*}
\end{proposition}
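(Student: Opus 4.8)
The plan is to reduce the statement about $\MMR$ to the analogous universal property for $\MM$ together with flatness of the localization. Recall that $\MMR = \MM \otimes_{\Sym(t^N\fh)} \Fun(\fh^{*,r})$, and that $\Fun(\fh^{*,r})$ is a localization of $\Fun(\fh^*) \approx \Sym(t^N\fh)$ at a multiplicative set $S$ (the powers of the product $\prod_{\alpha\in\Delta}\alpha$, viewed as a function on $\fh^*$). Since localization is flat, $\MMR = S^{-1}\MM$ as a module, and every element of $\MMR$ can be written as $v/s$ with $v \in \MM$, $s \in S$. The first step is to observe that $S$ is central in $\End_{\hat\fg}(\MM)$: the subring $\Sym(t^N\fh)$ sits inside $\U(t\fg[[t]]/\fgp)^{op}$, and one checks that $h_{i,N}$ commutes (in the opposite algebra, hence in the original) with all of $t\fg[[t]]/\fgp$ modulo $\fgp$ — indeed $[t^N\fh, t\fg[[t]]] \subseteq t^{N+1}\fg[[t]] \subseteq \fgp$ and the cocycle term $\Res_t(g_1\,dg_2)$ vanishes because it would require a $t^{-1}$ term while everything lives in $t\C[[t]]$. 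Hence $\MMR$ is naturally a $\hat\fg$-module and the localization is a localization of the endomorphism ring as well, compatibly.

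Next I would establish the map in both directions. The inclusion $(\MMR)^\fgp \subseteq \End_{\hat\fg}(\MMR)$ via $X \mapsto (\vac \mapsto \vac\cdot X)$ is the easy direction: any $\fgp$-invariant vector $w$ in a $\hat\fg$-module defines a homomorphism from the induced module $\MM$ by the universal property $\Hom_{\U_k\hat\fg}(\MM, M) = M^\fgp$; applying this with $M = \MMR$ and noting that the resulting map $\MM \to \MMR$ factors through $\MMR$ because it is $\Sym(t^N\fh)$-linear (the element $h_{i,N}$ acting on $\vac\cdot X$ equals $\vac\cdot(h_{i,N}X)$ and $h_{i,N}$ is already invertible on the target) gives a well-defined endomorphism of $\MMR$. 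Conversely, given $\phi \in \End_{\hat\fg}(\MMR)$, set $w := \phi(\vac) \in \MMR$. Then $w$ is $\fgp$-invariant because $\vac$ is and $\phi$ is $\hat\fg$-linear, so $w \in (\MMR)^\fgp$; and $\phi$ is determined by $w$ since $\vac$ generates $\MMR$ over $\U_k\hat\fg$ (it generates $\MM$, and localization does not change the generating set). The two constructions are mutually inverse by the same generation argument, which also shows the bijection respects the ring structures.

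The one point that requires genuine care — and which I expect to be the main obstacle — is verifying that the endomorphism produced from $w \in (\MMR)^\fgp$ is well-defined as a map on $\MMR$ rather than merely on $\MM$, i.e. that it is compatible with the localization. Concretely, one must check that the $\U_k\hat\fg$-module map $\MM \to \MMR$, $\vac \mapsto w$, sends the kernel of $\MM \to \MMR$ into zero. Since that kernel is the $S$-torsion submodule and $S$ acts invertibly (centrally) on $\MMR$, this is automatic once the centrality of $S$ in $\End_{\hat\fg}(\MM)$ is in hand; but making this rigorous requires knowing that $\vac$ generates $\MM$ as a module over $\U_k\hat\fg$ in a way compatible with the $\Sym(t^N\fh)$-action, i.e. that multiplication by $s \in S$ on $\MM$ corresponds to the endomorphism $\vac \mapsto s\cdot\vac$. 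This follows from the PBW-type description of $\MM$ as $\U_k\hat\fg/(\U_k\hat\fg\cdot\fgp)$ together with~\eqref{obvious}. With these observations the proposition follows formally; no hard computation is needed beyond the commutator check above.
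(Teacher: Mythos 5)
There is a genuine gap, and it sits exactly at the point you yourself flag as the one ``requiring genuine care.'' Your whole argument rests on the claim that the multiplicative set $S$ generated by the $h_{i,N}$ is \emph{central} in $\End_{\hat\fg}(\MM)$, but what you actually verify is only that $h_{i,N}$ is central in the subring $\U(t\fg[[t]]/\fgp)^{op}$ of~(\ref{obvious}). That inclusion is strict (this is the content of Remark~5 after Theorem~\ref{ThEnd}), and on the ``hidden'' endomorphisms centrality fails: by~(\ref{commrel2}) one has ${[b_{i,-N},b_{j,N}]}_\Lambda=-N(k-k_c)(h_i,h_j)\ne0$ for $k\ne k_c$, and $b_{j,N}$ is identified with the generator $h_{j,N}$ of $S$ (Lemma~\ref{HinBin}). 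This is why the Corollary records only that $S$ satisfies the \emph{Ore conditions}, not that it is central.

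The failure propagates into your construction. Given $w\in(\MMR)^\fgp$, the universal property does produce a $\hat\fg$-map $\MM\to\MMR$, $g\vac\mapsto gw$; but for it to descend to $\MMR$ by localization you need it to be $\Sym(t^N\fh)$-linear, i.e.\ you need $h_{i,N}w$ (left multiplication by the Lie algebra element) to equal $w\cdot h_{i,N}$ (the module action). These agree on $\vac$ but not on a general invariant vector: their difference is $\ad_{h_{i,N}}w$ in the notation of the paper, and it is nonzero already for $N=1$ and $w=h_{i,-1}\vac\otimes1+\sum_{\alpha\in\Delta_+}\alpha(h_i)e_{\alpha,0}f_{\alpha,0}\vac\otimes h_{\alpha,1}^{-1}$, where it is a nonzero multiple of $(h_i,h_j)\vac\otimes1$ --- this is just the nonvanishing commutator above read on the $\MMR$ side. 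Your parenthetical ``$h_{i,N}$ acting on $\vac\cdot X$ equals $\vac\cdot(h_{i,N}X)$'' conflates these two actions; they coincide only on the vacuum. (The same conflation undermines your injectivity argument: $\vac\otimes1$ does \emph{not} generate $\MMR$ over $\U_k\hat\fg$ alone.) The missing ingredient, which the paper supplies, is that $\ad_{h_{i,N}}$ is \emph{nilpotent} on $\MM$; this yields the Ore-type condition $x\cdot(h_{i,N})^j\in h_{i,N}\MM$ for $j\gg0$, hence left multiplication by $h_{i,N}$ becomes invertible on $\MMR$ and one obtains a \emph{left} action of $\Fun(\fh^{*,r})$ on $\MMR$, distinct from the module action. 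The endomorphism attached to $x\in(\MMR)^\fgp$ is then defined by $g\vac\otimes\phi\mapsto g(\phi x)$ using that left action, and one must still check the two compatibilities (i) and (ii) as in the paper, again via nilpotency of $\ad$. Without this step your map is simply not well defined on $\MMR$.
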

\begin{proof}
A map $\End_{\hat\fg}(\MMR)\to(\MMR)^\fgp$ is given by
$X\mapsto(\vac\otimes1)\cdot X$. It follows from the PBW
theorem that if $\phi x=0$, where $x\in\MM$,
$\phi\in\Sym(t^N\fh)$, then $x=0$ or $\phi=0$. This implies
that the above map is injective.

It remains to show that every $x\in(\MMR)^\fgp$ gives rise to
an endomorphism of $\MMR$. This is somewhat similar to
the construction of the sheaf of differential operators on a
variety. First we check the following analogue of an Ore
condition: for all $x\in\MM$, $i=1,\ldots,\rk\fg$, and all
large enough $j$ we have $x\cdot(h_{i,N})^j\in h_{i,N}\MM$.
This, in turn, follows immediately from the fact that
$\ad_{h_{i,N}}$ is a nilpotent operator on $\MM$. (By
definition $\ad_{h_{i,N}}x:=h_{i,N}x-x\cdot h_{i,N}$.)

Thus for $x\in\MMR$ the equation $h_{i,N}y=x$ has a unique
solution. Therefore we can construct a \emph{left action\/} of
$\Fun(\fh^{*,r})$ on $\MMR$ extending the action of
$\Sym(t^N\fh)$. Now we can define the action of
$x\in(\MMR)^\fgp$ on $\MMR$ by $g\vac\otimes\phi\mapsto g(\phi
x)$, where $g\in\U_k\hat\fg$, $\phi\in\Fun(\fh^{*,r})$. We need
to check two things: (i) $g(\phi x)=0$ if $g\in\fgp$, and (ii)
$g\psi(\phi x)=g(\phi\psi x)$ whenever $\psi\in\Sym(t^N\fh)$.
For~(i), let us write $\phi=\phi_1/\phi_2$ with
$\phi_i\in\Sym(t^N\fh)$, and set $y:=(\phi_2)^{-1}x$. Since
$[t^N\fh,\fgp]\subset\fgp$, it is enough to show that $gy=0$ as
long as $g\in\fgp$. We have $g\phi_2^ky=0$ for all $k\ge1$.
Therefore $(\phi_2)^ngy=(\ad_{\phi_2}^ng)y$, and this is zero
if $n$ is large enough. Now (i) follows; (ii) is proved easily
by writing $\phi=\phi_1/\phi_2$ as before.
\end{proof}
\begin{corollary}
The multiplicative set generated by
$h_{i,N}\in\End_{\hat\fg}(\MM)$, where $i=1,\ldots,\rk\fg$,
satisfies the Ore conditions, and $\End_{\hat\fg}(\MMR)$ is the
localization with respect to this set. Also,
\begin{equation}\label{tensor}
    \End_{\hat\fg}(\MMR)=\End_{\hat\fg}(\MM)
    \bigotimes_{\Sym(t^N\fh)}\Fun(\fh^{*,r}),
\end{equation}
and $\End_{\hat\fg}(\MM)\subset\End_{\hat\fg}(\MMR)$.
\end{corollary}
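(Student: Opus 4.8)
The plan is to deduce everything from Proposition~\ref{PrEnd} together with formal properties of Ore localization; the only substantive inputs are the local nilpotence of $\ad_{h_{i,N}}$ and the $\Sym(t^N\fh)$-torsion-freeness of $\MM$ that were already used in its proof. Set $R:=\End_{\hat\fg}(\MM)=(\MM)^\fgp$ and $A:=\Sym(t^N\fh)$, viewed as a commutative subring of $R$, and let $S\subset A$ be the multiplicative set generated by the $h_{\alpha,N}$, $\alpha\in\Delta_+$ (it contains the $h_{i,N}$). Then $\Fun(\fh^{*,r})=A[S^{-1}]$, so $\MMR=\MM\otimes_A A[S^{-1}]$ is the localization of the $A$-module $\MM$ at $S$. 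I would first prove the Ore conditions for $S$ in $R$, then identify $R[S^{-1}]$ with $\End_{\hat\fg}(\MMR)$; the display~\eqref{tensor} and the inclusion $R\subset\End_{\hat\fg}(\MMR)$ come out along the way.

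For the Ore conditions, note first that on $R=(\MM)^\fgp$ the operator $\ad_{h_{i,N}}$ from the proof of Proposition~\ref{PrEnd} coincides with the inner derivation $x\mapsto h_{i,N}x-xh_{i,N}$ of the ring $R$: if $w\in(\MM)^\fgp$ is represented by $\psi\vac$ with $\psi\in\U_k\hat\fg$, then $h_{i,N}w-w\cdot h_{i,N}=(\ad_{h_{i,N}}\psi)\vac$, which again lies in $(\MM)^\fgp$. In particular this derivation is locally nilpotent on $R$, and the $h_{i,N}$ pairwise commute in $R$. Fixing $r\in R$ and an index $i$, expand right multiplication by $h_{i,N}^m$ on $R$ as $(\lambda-\delta)^m$, where $\lambda$ is left multiplication by $h_{i,N}$ and $\delta=\ad_{h_{i,N}}$ is the (locally nilpotent, commuting with $\lambda$) inner derivation; if $\delta^p r=0$ then for $m\ge p$ every surviving term carries a positive power of $h_{i,N}$ on the left, so $r\cdot h_{i,N}^m\in h_{i,N}R$, and symmetrically $h_{i,N}^m\cdot r\in R\,h_{i,N}$. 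Doing this for each $i$ in turn — the commuting relations among the $h_{i,N}$, and the local nilpotence of the remaining $\ad_{h_{j,N}}$, survive each intermediate localization — yields the left and right Ore conditions for $S$. Finally, the elements of $S$ are non-zero-divisors in $R$: $rs=0$ with $s\in S$ means that $s$ annihilates $r$ under the right $A$-action on $\MM$, whence $r=0$ because $\MM$ is $A$-torsion-free; so $R\hookrightarrow R[S^{-1}]$.

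To identify $R[S^{-1}]$ with $\End_{\hat\fg}(\MMR)$, use Proposition~\ref{PrEnd}: $\End_{\hat\fg}(\MMR)=(\MMR)^\fgp=(\MM\otimes_A A[S^{-1}])^\fgp$. The formation of $\fgp$-invariants commutes with this localization, i.e. $(\MM\otimes_A A[S^{-1}])^\fgp=(\MM)^\fgp\otimes_A A[S^{-1}]$: the inclusion $\supseteq$ is clear since $\fgp$ acts through the $\MM$-factor, and conversely, given $z$ in the left-hand side, write it after clearing denominators as the class of $m\otimes s^{-1}$ with $m\in\MM$, $s\in S$; then for each $\xi\in\fgp$ the element $(\xi m)\otimes s^{-1}$ vanishes in $\MMR$, so $\xi m$ is killed by an element of $S$ and hence $\xi m=0$ by $A$-torsion-freeness; thus $m\in(\MM)^\fgp$ and $z\in(\MM)^\fgp\otimes_A A[S^{-1}]$. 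Therefore $\End_{\hat\fg}(\MMR)=(\MM)^\fgp\otimes_A A[S^{-1}]=\End_{\hat\fg}(\MM)\otimes_{\Sym(t^N\fh)}\Fun(\fh^{*,r})$, which is~\eqref{tensor}. Moreover, since $S\subset A\subset R$ with $A$ commutative and $S$ a two-sided Ore set in $R$, the natural map $R\otimes_A A[S^{-1}]\to R[S^{-1}]$, $r\otimes q\mapsto rq$, is surjective (every element of $R[S^{-1}]$ has the form $rs^{-1}$) and injective (every element of the source is of the form $r\otimes s^{-1}$, and $R\hookrightarrow R[S^{-1}]$); hence $\End_{\hat\fg}(\MMR)=\End_{\hat\fg}(\MM)[S^{-1}]$, and $\End_{\hat\fg}(\MM)\subset\End_{\hat\fg}(\MMR)$ is the canonical localization map.

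The point requiring care is the third paragraph: one must keep the left action of $\Sym(t^N\fh)$ on $\MM$ (by the subalgebra $t^N\fh\subset\hat\fg$, the action used to build $\MMR$) distinct from the right action by endomorphisms, verify that $\ad_{h_{i,N}}$ is genuinely the inner derivation of $R$ so that its local nilpotence passes to $R$, and invoke the $\Sym(t^N\fh)$-torsion-freeness of $\MM$ at exactly the step ``$\xi m$ is $S$-torsion $\Rightarrow\xi m=0$''. Everything else is bookkeeping with Ore localizations.
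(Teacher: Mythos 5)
Your proof is correct and follows the route the paper intends: the corollary is stated without its own proof, and your argument simply packages the two ingredients already established in the proof of Proposition~\ref{PrEnd} --- the local nilpotence of $\ad_{h_{i,N}}$ (giving the Ore condition via $r\cdot h_{i,N}^m\in h_{i,N}R$) and the PBW torsion-freeness of $\MM$ over $\Sym(t^N\fh)$ (giving injectivity and the commutation of $\fgp$-invariants with localization) --- into the formal Ore-localization statement. Your silent replacement of the multiplicative set generated by the simple $h_{i,N}$ by the one generated by all $h_{\alpha,N}$, $\alpha\in\Delta_+$, is the correct reading, since $\Fun(\fh^{*,r})$ is the localization of $\Sym(t^N\fh)$ at all root hyperplanes, not just the simple ones.
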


\begin{remark}
We see that $\forall\:X\in\End_{\hat\fg}(\MMR)$ we have
$\prod_i h_{i,N}^{\ell_i}X\in\End_{\hat\fg}(\MM)$ for some
non-negative integers $\ell_i$.
\end{remark}

\begin{theorem}\label{ThEnd}
For $k\ne k_c$ there is a natural injective homomorphism of
rings
\begin{equation}\label{answer}
\textstyle\Dmod\left(\prod_1^{N-1}\fb^*\times\fh^{*,r}\right)\otimes\Fun(\fh^*)
\hookrightarrow\End_{\hat\fg}(\MMR).
\end{equation}
\end{theorem}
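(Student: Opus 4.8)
The plan is to construct the injection via free field realization (irregular Wakimoto modules). First I would define, for each $N\ge1$, an irregular analogue $W_{N,k}$ of the Wakimoto module: roughly, take the $\beta\gamma$-system associated to the big cell $U_-\backslash G$ (more precisely its $N$-jet version, so that the bosonic fields carry the truncated loop structure $\C((t))/t^{N+1}$ in the $\fu$-directions and a $\C[[t]]/t^N$-worth of Heisenberg fields in the $\fh$-direction), tensored with a Fock-type module for $\hat\fh$ in which the zero modes $h_{i,0}$ act by a continuous parameter. The key structural claim is that after localizing along the $h_{i,N}$ (equivalently, after restricting the $\fh^*$-sheaf structure to $\fh^{*,r}$), there is an isomorphism of $\hat\fg$-modules $\MMR\cong W_{N,k}^{\mathrm{loc}}$. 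This is the irregular counterpart of the classical statement that the Wakimoto module at generic level is, after suitable identification, the vacuum Verma-type module; I would prove it by exhibiting the Wakimoto vacuum vector inside $\MMR$, checking it is annihilated by $\fgp$ (using the explicit free-field formulas for the currents), invoking the universal property of $\MM$ from the excerpt to get a map $\MM\to W_{N,k}$, localizing, and then comparing characters / using a filtration argument to see the map is an isomorphism after inverting the $h_{i,N}$.

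Granting that identification, the left side of~\eqref{answer} becomes transparent on the Wakimoto side. The $N-1$ copies of $\fb^*$ arise because the bosonic $\beta$-fields (the coordinates on $U\cong\fu$, jetted to order $N-1$ in $t$) together with the truncated Cartan directions furnish, at each jet level $1,\dots,N-1$, a polynomial algebra $\Fun(\fb^*)$ worth of multiplication operators, while their conjugate $\gamma$-type operators supply the matching derivations; together these generate $\Dmod\!\left(\prod_1^{N-1}\fb^*\right)$ acting by $\hat\fg$-endomorphisms (they commute with all the currents because the currents are built from these fields in a way that makes the relevant jet components central after localization — this is exactly the irregular analogue of the Feigin–Frenkel center construction alluded to in the introduction). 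The remaining factor $\Dmod(\fh^{*,r})$ comes from the $\fh$-sector: $\Fun(\fh^{*,r})$ acts by the localized $\Sym(t^N\fh)$ already present in~\eqref{obvious}, and the dual derivations come from the conjugate Heisenberg modes; one checks these preserve $\fgp$-invariance. The last tensor factor $\Fun(\fh^*)$ is the central character of the $\hat\fh$-Fock module, i.e. the zero-mode parameters $h_{i,0}$, which manifestly commute with everything and embed via~\eqref{obvious} as well. Assembling these commuting families gives a ring homomorphism from the left side of~\eqref{answer} into $\End_{\hat\fg}(\MMR)$.

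For injectivity I would argue on the associated graded. Equip $\MMR$ with the PBW filtration inherited from $\U_k\hat\fg$; then $\gr\End_{\hat\fg}(\MMR)$ contains the symbols of our operators, and on the Wakimoto side the symbols of the $\beta,\gamma$-operators and the Heisenberg modes are algebraically independent functions on the relevant (infinite-dimensional) phase space — the cotangent directions to $\prod\fb^*\times\fh^{*,r}$ together with the base coordinates $\fh^*$ — so no nonzero differential operator can map to zero. Equivalently, one notes that the composite $\Dmod\!\left(\prod_1^{N-1}\fb^*\times\fh^{*,r}\right)\otimes\Fun(\fh^*)\to\End(\MMR)\to\End_{\Fun(\fh^{*,r})}(\MMR)$ already sees these operators acting faithfully on the polynomial part of the Wakimoto realization.

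The main obstacle I expect is the identification $\MMR\cong W_{N,k}^{\mathrm{loc}}$: getting the irregular Wakimoto construction right (the correct truncations in $t$, the correct twist of the free-field currents so that the level is $k$ and the vacuum is killed by $\fgp$ rather than by the full $\fg[[t]]$), and then proving the comparison map is an isomorphism after localization rather than merely a surjection or injection, will require the careful bookkeeping. A secondary obstacle is verifying that the candidate $\fb^*$-differential operators actually commute with the $\hat\fg$-action — i.e. that the relevant jet-components of the screened currents are genuinely central on the localized module — which is where the hypothesis $k\ne k_c$ enters (at the critical level extra central elements appear and the clean picture degenerates).
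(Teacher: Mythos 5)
Your overall architecture is the same as the paper's: define irregular Wakimoto modules $M_N\otimes\pireg{k-k_c}$ with shifted vacuum conditions, prove $\MMR\cong M_N\otimes\pireg{k-k_c}$ by sending vacuum to vacuum and passing to associated graded, and then read off the endomorphisms from the free-field modes $a_{\alpha,n}$ with $0<n<N$, $a^*_{\alpha,n}$ with $-N<n<0$, and $b_{i,n}$ with $-N\le n\le N$, whose commutation relations produce exactly $\Dmod\left(\prod_1^{N-1}\fb^*\times\fh^{*,r}\right)\otimes\Fun(\fh^*)$; injectivity is essentially free from the PBW basis, as in your symbol argument. Two smaller inaccuracies: $M_N$ and $\hat\pi_N^{k-k_c}$ are modules over the full, untruncated algebras $\mathcal A$ and $\hat\fh_{k-k_c}$ with shifted annihilation conditions, not over $N$-jet truncations (a truncated algebra could not carry the $\hat\fg$-action); and the comparison map goes $\MM\to M_N\otimes\hat\pi_N^{k-k_c}$, using the universal property of $\MM$ after checking $\vac'$ is killed by $\fgp$, exactly as you propose.

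There is, however, one step whose stated justification would fail. The modes $a_{\alpha,n}$, $a^*_{\alpha,n}$, $b_{i,n}$ are \emph{not} central in $\mathcal A\otimes\hat\fh_{k-k_c}$, even after localization: for instance $[a_{\alpha,n},a^*_{\alpha,-n}]=1$ and $a^*_{\alpha,-n}$ occurs in the currents, so ``the relevant jet components are central'' cannot be why these operators commute with $\hat\fg$. The correct mechanism, and the one the paper uses, is the universal property of the induced module: $M_N\otimes\pireg{k-k_c}$ is a localization of a module induced over $\mathcal A\otimes\U_1\hat\fh_{k-k_c}$, so its endomorphism ring over that algebra is identified (by the analogue of Proposition~\ref{PrEnd}) with the space of vectors annihilated by the left ideal generated by $a_{\alpha,n}$ with $n\ge N$, $a^*_{\alpha,n}$ with $n\ge0$, and $b_{i,n}$ with $n>N$; one checks directly that the listed PBW monomials applied to $\vac'$ lie in this space. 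Since the $\hat\fg$-action factors through $\mathcal A\otimes\U_1\hat\fh_{k-k_c}$, every such endomorphism is automatically a $\hat\fg$-endomorphism --- these are right multiplications, not central elements. Relatedly, $k\ne k_c$ enters not through extra central elements appearing but because the induced commutator on endomorphisms is $[b_{i,n},b_{j,-n}]=n(k-k_c)(h_i,h_j)$: at the critical level the negative modes degenerate from derivations to additional commuting functions and the identification with differential operators breaks down.
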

\begin{remarks}
1. We can re-write the ring of endomorphisms as
\[
    \textstyle\Dmod\left(\prod_1^{N-1}\fu^*\right)\otimes\Fun(\fh^*)
    \otimes\Dmod\left(\prod_1^{N-1}\fh^*\times\fh^{*,r}\right).
\]
In fact the endomorphisms, corresponding to the first multiple
as well as corresponding to
$\Fun\left(\prod_1^{N-1}\fh^*\times\fh^{*,r}\right)\subset
\Dmod\left(\prod_1^{N-1}\fh^*\times\fh^{*,r}\right)$ can
easily be seen: they correspond to the inclusion~(\ref{obvious})
(though the isomorphism between this subring and the LHS
of~(\ref{obvious}) is not obvious). The endomorphisms,
corresponding to the second multiple, are also easy to see.

The remaining endomorphisms are hidden. For example, as we
shall see below in case $N=1$, these endomorphisms correspond
to the Casimir connection operators.

2. We expect that the homomorphism is an isomorphism for all
$k\ne k_c$.

3. If $k=k_c$ the ring of endomorphisms is much bigger, one can
show that it contains the infinite-dimensional space of
functions on opers on a formal disc as a~subquotient.

4. The level group $\fgp$ looks a little bit unnatural. It
seems more reasonable to take $\fgp=t^N\fg[[t]]$ or
$\fgp=\fu[[t]]+t^N\fg[[t]]$. In the second case the results and
proofs are expected to be very similar, however, the
interpretation of $N=1$ case as the Casimir connection is not
clear.

The first case seems to be more complicated and we hope to
address it in future publications.

5. The above corollary shows that the inclusion~(\ref{obvious})
is strict and $\End_{\hat\fg}(\MM)$ has a lot of
``non-obvious'' endomorphisms.
\end{remarks}

\subsubsection{The right version}
There is a standard anti-involution $\iota:\U_k\hat\fg\to
\U_{-k}\hat\fg$ sending $x\in\hat\fg$ to $-x$. By composing the
actions of $\U_{-k}\hat\fg$ on $\mathbb{M}_{N,-k}$ or
$\mathbb{M}_{N,-k,reg}$ with $\iota$ we get right
$\U_k\hat\fg$-modules $\MM^r$ and $\MMR^r$. It is easy to check
that $\MM^r\approx\U_k\hat\fg/(\fgp\cdot \U_k\hat\fg)$ and
$\MMR^r\approx\Fun(\fh^{*,r})\otimes_{\Sym(t^N\fh)}\MM^r$. The
rings of endomorphisms are clearly the same, thus we have an
inclusion
\[
    \End_{\hat\fg}(\MMR^r)\supset
    \Dmod\left(\prod_1^{N-1}\fb^*\times
    \fh^{*,r}\right)\otimes\Fun(\fh^*).
\]

\subsection{The functor of coinvariants}
Let $M$ be any left $\hat\fg$-module of level $k$. Note that
$\Sym(t^N\fh)$ acts on the space of coinvariants $M/\fgp\cdot
M$ because $[t^N\fh,\fgp]\subset\fgp$. Thus we can form the
tensor product
\[
    \F(M):=\Fun(\fh^{*,r})
    \bigotimes_{\Sym(t^N\fh)}(M/\fgp\cdot M).
\]
\begin{lemma}
$\End_{\hat\fg}(\MMR^r)$ acts on $\F(M)$.
\end{lemma}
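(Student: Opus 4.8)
The plan is to identify $\F(M)$ with the tensor product $\MMR^r\otimes_{\U_k\hat\fg}M$ of the right $\hat\fg$-module $\MMR^r$ with $M$, and then to let $\End_{\hat\fg}(\MMR^r)$ act through the first factor by $\phi\mapsto\phi\otimes\mathrm{id}_M$.

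First I would spell out the identification. Using $\MM^r\approx\U_k\hat\fg/(\fgp\cdot\U_k\hat\fg)$, for any left $\hat\fg$-module $M$ of level $k$ one gets $\MM^r\otimes_{\U_k\hat\fg}M\cong M/\fgp\cdot M$. Because $[t^N\fh,\fgp]\subset\fgp$, left multiplication by $t^N\fh$ preserves the right ideal $\fgp\cdot\U_k\hat\fg$, so $\Sym(t^N\fh)$ acts on $\MM^r$ on the left; this left action commutes with the right $\U_k\hat\fg$-action (both come from multiplication in $\U_k\hat\fg/(\fgp\cdot\U_k\hat\fg)$, which is associative), making $\MM^r$ a $(\Sym(t^N\fh),\U_k\hat\fg)$-bimodule. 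A short check — writing $v$ for the image of $1$ in $\MM^r$ and using $h\cdot v=v\cdot h$ for $h\in t^N\fh$, so that $h\cdot(v\otimes m)=v\otimes(h\cdot m)$ — shows that the isomorphism $\MM^r\otimes_{\U_k\hat\fg}M\cong M/\fgp\cdot M$ intertwines the $\Sym(t^N\fh)$-action inherited from $\MM^r$ with the one used to define $\F$. Now I would tensor on the left with $\Fun(\fh^{*,r})$ over $\Sym(t^N\fh)$: since $\MMR^r\approx\Fun(\fh^{*,r})\otimes_{\Sym(t^N\fh)}\MM^r$, associativity of the tensor product gives
\[
    \MMR^r\otimes_{\U_k\hat\fg}M\;\cong\;\Fun(\fh^{*,r})\otimes_{\Sym(t^N\fh)}\bigl(\MM^r\otimes_{\U_k\hat\fg}M\bigr),
\]
and the right-hand side is in turn isomorphic to $\Fun(\fh^{*,r})\otimes_{\Sym(t^N\fh)}(M/\fgp\cdot M)=\F(M)$.

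Finally, any $\phi\in\End_{\hat\fg}(\MMR^r)$ is an endomorphism of $\MMR^r$ as a right $\hat\fg$-module, so $\phi\otimes\mathrm{id}_M$ is a well-defined endomorphism of $\MMR^r\otimes_{\U_k\hat\fg}M\cong\F(M)$, and $\phi\mapsto\phi\otimes\mathrm{id}_M$ is compatible with composition; this is the asserted action. I do not expect a genuine obstacle: the statement is formal once $\F(M)\cong\MMR^r\otimes_{\U_k\hat\fg}M$ is established, and the only point needing a little care is the verification that the $\Sym(t^N\fh)$-module structure on the coinvariants coming from $\MM^r$ coincides with the one in the definition of $\F$, so that the two localizations agree.
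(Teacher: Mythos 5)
Your proof is correct, and it is a genuine (if mild) variant of the paper's argument. The paper first observes that $\End_{\hat\fg}(\MM^r)=(\U_k\hat\fg/(\fgp\cdot\U_k\hat\fg))^\fgp$ acts on the unlocalized coinvariants $M/\fgp\cdot M$ (by left multiplication of representatives), and then transports this action to $\F(M)$ by writing $\F(M)=\End_{\hat\fg}(\MMR^r)\otimes_{\End_{\hat\fg}(\MM^r)}(M/\fgp\cdot M)$, which requires the identity $\End_{\hat\fg}(\MMR^r)=\End_{\hat\fg}(\MM^r)\otimes_{\Sym(t^N\fh)}\Fun(\fh^{*,r})$ from the corollary of Proposition~\ref{PrEnd} --- i.e.\ the localization is performed on the endomorphism ring. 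You instead localize the bimodule itself: you establish $\F(M)\cong\MMR^r\otimes_{\U_k\hat\fg}M$ via the $(\Sym(t^N\fh),\U_k\hat\fg)$-bimodule structure on $\MM^r$ and associativity of the tensor product, and then act through the first factor by functoriality of $-\otimes_{\U_k\hat\fg}M$. Your route buys independence from the Ore-localization statement~(\ref{tensor}), at the cost of the (correctly identified and correctly discharged) verification that the $\Sym(t^N\fh)$-action on $\MM^r\otimes_{\U_k\hat\fg}M$ matches the one in the definition of $\F$; the paper's route makes more visible the concrete fact that elements of $(\MM^r)^\fgp$ already act on $M/\fgp\cdot M$ before localization, which is the form in which the action is used later (e.g.\ in \S\ref{CASIMIR}). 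Both arguments are formally complete.
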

\begin{proof}
Clearly, $\End_{\hat\fg}(\MM^r)=(\U_k\hat\fg/(\fgp\cdot
\U_k\hat\fg))^\fgp$ acts on $M/\fgp\cdot M$. Thus its
localization $\End_{\hat\fg}(\MMR^r)$ acts on
\[
\End_{\hat\fg}(\MMR^r)\bigotimes_{\End_{\hat\fg}(\MM^r)}
(M/\fgp\cdot M)=\F(M).
\]
The last equality follows from the right version
of~({\ref{tensor}}).
\end{proof}

Thus we get a functor $\F$ from $\U_k\hat\fg$-mod to the
category of modules over
\[
    \textstyle\Dmod\left(\prod_1^{N-1}\fb^*\times\fh^{*,r}\right)
    \otimes\Fun(\fh^*).
\]
Such a module can be viewed as an $\fh^*$-family of
$\Dmod$-modules on $\prod_1^{N-1}\fb^*\times\fh^{*,r}$ and also
as a $\Dmod$-module on $\prod_1^{N-1}\fb^*\times\fh^{*,r}$ by
forgetting the action of $\Fun(\fh^*)$. In particular, it is a
quasi-coherent sheaf on $\prod_1^{N-1}\fb^*\times\fh^{*,r}$.

\subsection{Case $N=1$ and the Casimir connection}\label{CASIMIR} For $N=1$ we have
\[
  \F:\U_k\hat\fg\text{-mod}\to(\Dmod(\fh^{*,r})\otimes\Fun(\fh^*))\text{-mod}
  =(\Dmod(\fh^r)\otimes\Fun(\fh^*))\text{-mod},
\]
where we use the bilinear form to identify $\fh^*$ with $\fh$.
For any $\fg$-module $\V$ set
\[
    V:=\Ind_{\fg[t^{-1}]\oplus\C\mathbf{1}}^{\hat\fg}\V=
    \U_k\hat\fg\otimes_{\U(\fg[t^{-1}])}\V,
\]
where $t^{-1}\fg[t^{-1}]$ acts on $\V$ by zero, $\mathbf{1}$
acts by $k$.

Consider the trivial vector bundle $\V\times\fh^r$ over
$\fh^r$. Recall that Casi\-mir connection is a connection on
this bundle, see~\cite{ToledanoLaredo, ToledanoLaredo0,
ToledanoLaredo2}.

\begin{theorem}\label{ThCasimir}
\noindstep $\F(V)$ is naturally identified with $\V\times\fh^r$
as a sheaf on $\fh^r$.\\
\noindstep Under this identification the
$\Dmod(\fh^r)$-structure on $\F(V)$ is given by the Casi\-mir
connection up to a twist by a line bundle on $\fh^r\times\fh^*$
with a connection along~$\fh^r$.
\end{theorem}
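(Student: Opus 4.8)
The plan is to realize $\F(V)$ explicitly and compare with the known formula for the Casimir connection. First I would compute $\F(V)$ as a sheaf. By definition $\F(V)=\Fun(\fh^{*,r})\otimes_{\Sym(t\fh)}(V/\fgp V)$, where for $N=1$ we have $\fgp=t(\fu_-\oplus\fu)\oplus t^2\fg[[t]]$. Since $V=\U_k\hat\fg\otimes_{\U(\fg[t^{-1}])}\V$, the PBW theorem gives a vector space decomposition $V\cong\U(t\fg[[t]])\otimes\V$ (using that $\hat\fg=\fg[t^{-1}]\oplus t\fg[[t]]$ as vector spaces, modulo the central direction which acts by $k$). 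Taking coinvariants with respect to $\fgp$ kills almost everything in the $\U(t\fg[[t]])$ factor: one checks that $t\fg[[t]]/\fgp\cong t\fh$ as vector spaces (since $\fgp$ already contains $t(\fu_-\oplus\fu)$ and all of $t^2\fg[[t]]$), so $V/\fgp V\cong\Sym(t\fh)\otimes\V$ as $\Sym(t\fh)$-modules, the $\Sym(t\fh)$-action being on the first factor. Hence $\F(V)\cong\Fun(\fh^{*,r})\otimes\V$, i.e. the trivial bundle $\V\times\fh^{*,r}\cong\V\times\fh^r$. This proves part~(a).

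For part~(b) I would track how the hidden endomorphisms of $\MMR^r$ act. The Casimir connection has the form $\nabla=d-k_c^{-1}\sum_{\alpha\in\Delta_+}\frac{d\alpha}{\alpha}\,\Omega_\alpha$ (up to normalization and a possible scalar/abelian part), where $\Omega_\alpha=\tfrac12 h_\alpha^2$-ish, more precisely $\Omega_\alpha=x_\alpha y_\alpha+y_\alpha x_\alpha$ is the truncated Casimir for the $\fsl_2$-triple attached to $\alpha$, acting on $\V$. So I must show that the distinguished first-order operators in $\Dmod(\fh^r)\subset\End_{\hat\fg}(\MMR^r)$, which by Theorem~\ref{ThEnd} are the ``hidden'' generators coming from the Wakimoto realization, act on $\F(V)=\Fun(\fh^{*,r})\otimes\V$ as $\partial_{h_i}$ plus a term that reproduces exactly $k_c^{-1}\sum_\alpha\frac{\langle\alpha,\cdot\rangle}{\alpha(h)}\Omega_\alpha$. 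The key computation is to take the explicit formula for the hidden endomorphism of $\MMR$ from the irregular Wakimoto realization, transport it through the isomorphism $\MMR\cong$ (irregular Wakimoto module), write it as an operator on the vacuum $\vac\otimes1$, and then let it act on $g\vac\otimes\phi\in\F(V)$ via $g(\phi x)$ as in the proof of Proposition~\ref{PrEnd}. The vectors $g\vac$ with $g\in\U\fg\subset\U_k\hat\fg$ surject onto $\V=\F(V)|_{\rm fiber}$, so it suffices to compute the action on such vectors; the central charge $k$ enters through $k_c=-h^\vee$ and the appearance of $k-k_c$ in the denominators of the Wakimoto formulae, which is exactly why $k\ne k_c$ is needed.

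The main obstacle I expect is part~(b): matching the explicit Wakimoto-side operator with the Felder--Markov--Tarasov--Varchenko/Toledano~Laredo formula for $\Omega_\alpha$, including getting the line-bundle twist right. The twist arises because the natural trivialization of $\F(V)$ coming from the vacuum vector differs from the trivialization in which the Casimir connection is usually written by a rank-one factor; concretely, conjugating $\nabla$ by a function of the form $\prod_\alpha\alpha(h)^{c_\alpha}$ (with $c_\alpha$ depending on $k$ and on the $\fh^*$-parameter) changes the residue of the connection along each hyperplane $\alpha=0$ by a scalar, and one must verify that the scalar discrepancy between the two sides is precisely of this form — i.e. independent of the $\fg$-module $\V$ and living in the scalars. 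I would isolate this by first doing $\fg=\fsl_2$ by hand, where everything reduces to a single variable and a hypergeometric-type connection, pin down $c_\alpha$ there, and then argue the general case reduces to the rank-one computation one root at a time because the residue of the Casimir connection at $\alpha=0$ only involves the $\fsl_2$-triple for $\alpha$. Checking flatness on both sides (or invoking that the Wakimoto construction automatically produces a flat connection, since the operators commute in $\End_{\hat\fg}(\MMR^r)$) then forces agreement of the off-diagonal structure once the residues match.
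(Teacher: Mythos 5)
Your proposal follows essentially the same route as the paper. Part (a) is exactly the paper's argument: PBW gives $V/\fgp\cdot V\cong\Sym(t\fh)\otimes\V$ with $\Sym(t\fh)$ acting on the first factor, and localizing yields $\F(V)\cong\Fun(\fh^r)\otimes\V$. For part (b) your core plan --- transport the explicit ``hidden'' endomorphism through the Wakimoto isomorphism, let it act on coinvariants via the mechanism of Proposition~\ref{PrEnd}, and compare with the Casimir connection up to an abelian twist --- is precisely what the paper does; the paper's explicit answer is $\nabla_{h_i}=\partial_{h_i}+2\hbar\sum_\alpha\frac{\alpha(h_i)}{\alpha}\cdot\frac{(\alpha,\alpha)}2 f_\alpha e_\alpha$, and the comparison with $\nabla_{Casimir}$ is then the one-line identity $e_\alpha f_\alpha+f_\alpha e_\alpha=2f_\alpha e_\alpha+h_\alpha$, the $h_\alpha$ term being the line-bundle twist on $\fh^r\times\fh^*$ (it is a function of the $\fh^*$-parameter, not a genuine scalar on $\V$, exactly as you anticipate). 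The one step I would warn you off is the proposed endgame ``residues match $+$ flatness forces agreement'': two flat connections with equal residues along the hyperplanes need not coincide (they can differ by a closed holomorphic abelian term with zero residues), so that deduction is not valid as stated. It is also unnecessary: once you have the explicit Wakimoto-side formula --- which is your own declared key computation, and which is genuinely the hard part (the vanishing analysis of the normally ordered terms in $f_{\alpha,0}\vac'$ and $e_{\alpha,0}a^*_{\alpha,-1}\vac'$) --- the connection is manifestly of pure logarithmic form with constant residue operators, and the direct comparison closes the argument without any $\fsl_2$ reduction or flatness appeal.
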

\begin{remarks}
1. The structure of a sheaf on $\fh^*$ on $\F(V)$ comes from
the action of~$\fh$ on~$\V$, and this action commutes with both
the Casimir and our connections. Assume that~$\V$ is
finite-dimensional; then $\F(V)=\V\times\fh^r$ is supported at
finitely many points $\beta_i\in\fh^*$ as a sheaf on $\fh^*$
(the weight decomposition). The restrictions of two connections
to $\{\beta_i\}\times\fh^r$ differ by a line bundle with
connection. In particular the monodromies of our connection and
of the Casimir connection differ by an explicit scalar factor.

2. The first part of this theorem follows easily from PBW
theorem.

\end{remarks}
In concrete terms, the operators of our connection correspond
to the following endomorphisms of $\MMR^r$,
see~\S\ref{RIGHTCASIMIR}:
\[
    1\otimes\vac\mapsto1\otimes\vac h_{i,-1}+\sum_{\alpha\in\Delta_+}
 \alpha(h_i)h_{\alpha,1}^{-1}\otimes\vac f_{\alpha,0}e_{\alpha,0}.
\]
The corresponding ``regularized'' endomorphisms of $\MM$ are
given by
\begin{equation*}
    \vac\mapsto\vac h_{i,-1}\prod_{\beta\in\Delta_+}h_{\beta,1}+
    \sum_{\alpha\in\Delta_+}
    \alpha(h_i)\vac\prod_{\substack{\beta\in
    \Delta_+\\\beta\ne\alpha}} h_{\beta,1} f_{\alpha,0}e_{\alpha,0}.
\end{equation*}

\subsection{Irregular Wakimoto modules}
The idea of the proof of Theorem~\ref{ThEnd} is to construct an
isomorphism between $\MMR$ and another module, which we
call~\emph{an irregular Wakimoto module}. Its ring of
endomorphisms is easier to calculate. Irregular Wakimoto
modules will be defined by means of free field realization. Our
notation follows~\cite{FrenkelBook}.

Let $\mathcal A$ be the associative algebra with generators
$a_{\alpha,n}, a^*_{\alpha,n}$, $\alpha\in\Delta_+$, $n\in\Z$
and relations
\begin{equation*}
    [a_{\alpha,n},a^*_{\beta,m}]=
    \delta_{\alpha,\beta}\delta_{n,-m},\qquad
    [a_{\alpha,n},a_{\beta,m}]=
    [a^*_{\alpha,n},a^*_{\beta,m}]=0.
\end{equation*}
(The same algebra is defined in~\cite[\S5.3.3]{FrenkelBook},
where it is denoted by $\mathcal A^\fg$.)

Let $M_N$ be the module over $\mathcal A$ generated by a
vector, which we also denote by~$\vac$ and call a vacuum
vector, and relations:
\begin{equation}\label{vacuum}
a_{\alpha,n}\vac=0,\; n\ge N,\qquad
a^*_{\alpha,n}\vac=0,\; n\ge0.
\end{equation}
This module is analogous to the module $M_\fg$ defined
in~\cite[\S5.4.1]{FrenkelBook}. Note that~$a_{\alpha,n}$ with
$n\ge N$ and $a^*_{\alpha,n}$ with $n\ge0$ generate a
(commutative) subalgebra in $\mathcal A$.

Further, for $k\in\C$ let $\hat\fh_k$ be the Cartan part of
$\hat\fg_k$, where $\hat\fg_k$ is the central extension
$\hat\fg$, rescaled by $k$. More precisely, it is the Lie
algebra generated by~$b_{i,n}$, $i=1\ldots\rk\fg$, $n\in\Z$,
and a central element $\mathbf{1}$. The bracket is given by
\begin{equation}\label{binbjm}
    [b_{i,n},b_{j,m}]=-nk(h_i,h_j)\delta_{n,-m}\mathbf{1}.
\end{equation}

Let $\hat\pi_N^k$ be an $\hat\fh_k$-module generated by $\vac$
with relations
\[
b_{i,n}\vac=0,\;n>N,\quad\mathbf{1}\vac=\vac.
\]
Note the similarity between $\hat\pi_N^k$ and $\pi_0^k$
in~\cite[\S6.2.1]{FrenkelBook}. Let $\fhp$ be the subalgebra of
$\hat\fh^k$ generated by $b_{i,n}$ with $n>N$. If $\mathbf{1}$
acts on an $\hat\fh_k$-module $M$ by 1, then
\[
\Hom_{\hat\fh_k}(\hat\pi_N^k,M)=M^{\fhp}.
\]

Let $\pireg k$ be the localization of $\hat\pi_N^k$ defined as
follows: we have a unique endomorphism of $\hat\pi_N^k$ such
that $\vac\mapsto b_{i,N}\vac$ (follows from the universal
property of~$\hat\pi_N^k$). This gives an action of
$\Sym(t^N\fh)$ on $\hat\pi_N^k$. Analogously to (\ref{defmmr})
we set
\[
  \pireg k:=
  \hat\pi_N^k\bigotimes_{\Sym(t^N\fh)}\Fun(\fh^{*,r}).
\]
We call $M_N\otimes\pireg{k-k_c}$ an \emph{irregular Wakimoto
module} by analogy with $W_{\lambda,k}$ from~\cite[\S
6.2]{FrenkelBook}.

For a monomial $A$ in $a_{\alpha,n}$ and $a^*_{\alpha,m}$ we
define its normal ordering ${:}A{:}$ by moving all
$a_{\alpha,n}$ with $n\ge0$ and all $a^*_{\alpha,m}$ with $m>0$
to the right.

Define the generating functions
\begin{equation*}
\begin{split}
    a_\alpha(z):=&\sum_{n\in\Z} a_{\alpha,n}z^{-n-1},\quad
    a^*_\alpha(z):=\sum_{n\in\Z} a^*_{\alpha,n}z^{-n},\quad
    b_i(z):=\sum_{n\in\Z} b_{i,n}z^{-n-1},\\
    e_\alpha(z):=&\sum_{n\in\Z} e_{\alpha,n}z^{-n-1},\quad
    h_\alpha(z):=\sum_{n\in\Z} h_{\alpha,n}z^{-n-1},\quad
    f_\alpha(z):=\sum_{n\in\Z} f_{\alpha,n}z^{-n-1}.
\end{split}
\end{equation*}
Also, set $e_i(z):=e_{\alpha_i}(z)$, $f_i(z):=f_{\alpha_i}(z)$.

\begin{theorem}\label{ThFreeField}
\noindstep For certain polynomials $P_\beta^i$ and $Q_\beta^i$
without constant terms, and certain constants $c_i$ the
following formulae give a level $k$ action of $\hat\fg$ on
$M_N\otimes\hat\pi_N^{k-k_c}${\rm:}
\begin{equation}\label{ffr}
\begin{split}
e_i(z)\mapsto&\,a_{\alpha_i}(z)+\sum_{\beta\in\Delta_+}
{:}P_\beta^i(a_\alpha^*(z))a_\beta(z){:},\\
h_i(z)\mapsto&-\sum_{\beta\in\Delta_+}
\beta(h_i){:}a_\beta^*(z)a_\beta(z){:}+b_i(z),\\
f_i(z)\mapsto& \sum_{\beta\in\Delta_+}
{:}Q_\beta^i(a_\alpha^*(z))a_\beta(z){:}-
(c_i+(k-k_c)(e_i,f_i))\partial_za^*_{\alpha_i}(z)+
b_i(z)a^*_{\alpha_i}(z).
\end{split}
\end{equation}
\noindstep This action gives rise to an action of $\hat\fg$ on
$M_N\otimes\pireg{k-k_c}$.\\
\noindstep There is a unique isomorphism $\wp:\MMR\to
M_N\otimes\pireg{k-k_c}$ of $\hat\fg$-modules such that
$\vac\otimes1\mapsto\vac\otimes(\vac\otimes1)$. {\rm(}In the
RHS the first multiple is in $M_N$, the second multiple
$\vac\otimes1\in\pireg{k-k_c}$.{\rm)}
\end{theorem}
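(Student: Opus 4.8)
The plan is to handle the three parts in turn; the substance of the theorem is part~(c), and within it the only genuinely non-routine step is that the resulting map $\wp$ is an isomorphism.

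\emph{Part (a).} The formulae~(\ref{ffr}) are literally the same formal expressions in the generating series as the (regular) free field realization of~\cite[\S6.2]{FrenkelBook}, with $P^i_\beta,Q^i_\beta,c_i$ the polynomials and constants produced there. The content of~\cite[\S6.2]{FrenkelBook} is that the operator product expansions among the series $e_i(z),h_i(z),f_i(z)$ so defined reproduce the defining relations of $\hat\fg$ at level $k$, and that computation uses only the commutation relations in $\mathcal A$ and in $\hat\fh_{k-k_c}$, so it is insensitive to the module. Hence the only thing to verify is that~(\ref{ffr}) defines honest operators on $M_N\otimes\hat\pi_N^{k-k_c}$, i.e.\ that on each vector only finitely many Fourier modes of each normally ordered product act nontrivially; this is immediate from~(\ref{vacuum}) and $b_{i,n}\vac=0$ ($n>N$), since any vector is a finite sum of monomials in creation operators applied to $\vac\otimes\vac$, hence annihilated by $a_{\alpha,n},a^*_{\alpha,n},b_{i,n}$ for $n\gg0$, and normal ordering puts exactly these annihilating modes on the right. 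Otherwise one quotes~\cite{FrenkelBook}.

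\emph{Part (b) and the construction in part (c).} Let $\vartheta_i$ be the endomorphism of $\hat\pi_N^{k-k_c}$ with $\vac\mapsto b_{i,N}\vac$, so that $\Sym(t^N\fh)=\C[\vartheta_1,\dots,\vartheta_{\rk\fg}]$. It is right multiplication by $b_{i,N}$ on the $\hat\pi_N$-factor, whereas in~(\ref{ffr}) every $b$ acts there by left multiplication; since left and right multiplications commute, $1\otimes\vartheta_i$ commutes with the $\hat\fg$-action, so the action extends $\Fun(\fh^{*,r})$-linearly to $M_N\otimes\pireg{k-k_c}$, which is part~(b). For part~(c) one first checks --- using~(\ref{ffr}), (\ref{vacuum}), $b_{i,n}\vac=0$ ($n>N$), and the explicit expressions, of the same shape as~(\ref{ffr}), for the currents $e_\alpha(z),f_\alpha(z)$ ($\alpha\in\Delta_+$) --- that $\vac\otimes\vac$ is annihilated by $\fgp$: every mode $e_{\alpha,m},f_{\alpha,m}$ with $m\ge N$ and $h_{i,m}$ with $m\ge N+1$ kills $\vac\otimes\vac$ by a mode count, any nonzero contribution forcing an annihilation operator $a_{\beta,\cdot}$ of index $\ge N$, or $b_{i,\cdot}$ of index $>N$, or $a^*_{\gamma,\cdot}$ of index $\ge0$, to act before hitting the vacuum. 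By the universal property of $\MM$ this yields a $\hat\fg$-homomorphism $\wp_0\colon\MM\to M_N\otimes\hat\pi_N^{k-k_c}$, $\vac\mapsto\vac\otimes\vac$. It is $\Sym(t^N\fh)$-equivariant: both $\wp_0$ followed by the $h_{i,N}$-endomorphism and $(1\otimes\vartheta_i)$ followed by $\wp_0$ are $\hat\fg$-maps, and they agree on $\vac$ since the $a^*a$-part of $h_{i,N}$ also kills $\vac\otimes\vac$, so $h_{i,N}(\vac\otimes\vac)=\vac\otimes b_{i,N}\vac$; hence they coincide. Tensoring with $\Fun(\fh^{*,r})$ over $\Sym(t^N\fh)$ gives a $\hat\fg$-homomorphism $\wp\colon\MMR\to M_N\otimes\pireg{k-k_c}$ with $\vac\otimes1\mapsto\vac\otimes(\vac\otimes1)$; any such homomorphism is unique, by Proposition~\ref{PrEnd} applied to both modules (it identifies the endomorphism fixing $\vac\otimes1$ with the identity).

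\emph{Part (c): $\wp$ is an isomorphism.} This is the crux. I would prove it by exhibiting the inverse: realize operators $a_{\alpha,n},a^*_{\alpha,n},b_{i,n}$ on $\MMR$ taking $\vac\otimes1$ to the vacuum. The series $a_\alpha(z)$ and $b_i(z)$ are read off from $e_\alpha(z)$ and $h_i(z)$ (solving for $b_i(z)$ using the already reconstructed $a$'s and $a^*$'s), while $a^*_{\alpha_i}(z)$ is recovered from $f_{\alpha_i}(z)=\dots+b_i(z)a^*_{\alpha_i}(z)$ by solving for its Fourier modes recursively --- and this is precisely where one must invert the top mode $b_{i,N}$ of $b_i(z)$, that is, invert $h_{i,N}$, which is exactly what the passage from $\MM$ to $\MMR$ supplies. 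One then checks the Heisenberg and $\hat\fh_{k-k_c}$-relations for these operators (a formal consequence of the $\hat\fg$-relations already in force) and that the resulting module with its cyclic vector is $M_N\otimes\pireg{k-k_c}$, obtaining a $\hat\fg$-map $M_N\otimes\pireg{k-k_c}\to\MMR$ that matches cyclic vectors, hence is two-sided inverse to $\wp$. Equivalently, one may pass to associated graded (the PBW filtration on $\MM$, the order filtration on the Wakimoto side): $\gr\wp$ becomes the classical, commutative Wakimoto map --- essentially the moment map for the loop group acting on $T^*$ of the loop space of $U$, twisted by $\fh^*$ --- which one checks to be an isomorphism once the $\fh^*$-coordinate is inverted along $\fh^{*,r}$; then so is $\wp$. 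In either form the localization is indispensable: before it $\MM$ carries ``extra'' generators $f_{\alpha,m}$ ($0\le m\le N-1$) which on the Wakimoto side correspond to $a^*$-modes only accessible after $b_{i,N}$ is inverted, and reconciling them is the main obstacle of the proof.
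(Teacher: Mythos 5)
Your parts (a) and (b) and your construction of $\wp$ coincide with the paper's: part (a) is imported from \cite[Theorem~6.2.1]{FrenkelBook} (the paper phrases the transfer to $M_N\otimes\hat\pi_N^{k-k_c}$ via vertex-algebra module structures rather than re-running OPEs, which is cosmetic), part (b) is the observation that the $\Sym(t^N\fh)$-action commutes with the $\hat\fg$-action, and the construction of $\wp$ rests on the same two lemmas ($\fgp\vac'=0$ and $h_{i,N}\vac'=b_{i,N}\vac'$). For the crux --- that $\wp$ is an isomorphism --- the route you present as primary (reconstructing the operators $a_{\alpha,n},a^*_{\alpha,n},b_{i,n}$ on $\MMR$ and checking the $\mathcal A\otimes\hat\fh_{k-k_c}$-relations ``formally'') is not what the paper does, and as sketched it is under-justified: solving $f_{\alpha_i}(z)=\cdots+b_i(z)a^*_{\alpha_i}(z)$ for the modes of $a^*_{\alpha_i}(z)$ is circular as written (the suppressed terms already involve $a$- and $a^*$-modes, so one needs an induction on a filtration to break the circle), and the assertion that the Heisenberg relations follow formally from the $\hat\fg$-relations is precisely the hard content, not a formality. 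Your ``equivalently'' clause, however, is exactly the paper's proof: one introduces the weight filtration $I(x_{\cdot,n})=N-n$ (the paper uses this rather than the plain PBW filtration, since one needs a filtration that $\wp$ visibly preserves and whose associated graded is commutative on both sides; on $V_k(\fg)$ the two have the same associated graded because $I=N\cdot PBW-\deg$), identifies $\gr\MMR=\Fun(\tilde\fg^*)$ and $\gr(M_N\otimes\pireg{k-k_c})=\Fun(\tilde U\times\tilde\fb^*)$, shows that $\gr\wp$ is induced by $(u,A)\mapsto\Ad_uA$, and checks that this map is an isomorphism by conjugating any element of $\tilde\fg^*$ into $\tilde\fb^*$ by a unique element of $\tilde U$, which works because $A_{-N-1}\in\fh^{*,r}$ and $\ad$ of a regular element is invertible on $\fu$. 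Two ingredients the paper needs for this and that your sketch leaves implicit are Proposition~\ref{PrAllRoots} (the explicit shape of the images of $e_\alpha(z)$ and $f_\alpha(z)$ for \emph{all} positive roots, used both to verify $\fgp\vac'=0$ and to compute the associated graded) and the identification of the classical free-field map with $(u,A)\mapsto\Ad_uA$ via the jet coordinates $y_{\alpha,n},w_{\alpha,n}$.
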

\begin{remark}
Our sign convention~(\ref{affinekacmoody}) does not agree
with~\cite[(1.3.3)]{FrenkelBook}. So our sign in front of the
second term in the last formula in~(\ref{ffr}) is different, as
well as the sign in~(\ref{binbjm}).
\end{remark}
In fact, part (a) is an easy consequence
of~\cite[Theorem~6.2.1]{FrenkelBook}, part~(b) is an easy
consequence of part~(a), the proof of part~(c) will occupy a
substantial part of this paper, it will be done by constructing
certain filtrations.

Theorem~\ref{ThEnd} follows from the above Theorem: the
required endomorphisms of $\MMR\approx M_N\otimes\pireg{k-k_c}$
are given by
$\vac\otimes(\vac\otimes1)\mapsto-a_{\alpha,n}\vac\otimes(\vac\otimes1)$,
where $0<n<N$, $\vac\otimes(\vac\otimes1)\mapsto
a^*_{\alpha,n}\vac\otimes(\vac\otimes1)$, where $-N<n<0$, and
$\vac\otimes(\vac\otimes1)\mapsto\vac\otimes(b_{i,n}\vac\otimes1)$,
with $-N\le n\le N$. We shall give more details
in~\S\ref{PrfThEnd}.

\subsection{The quasi-classical limit}\label{Isostokes}
Let $\hat\fg^*:=\fg^*\otimes\C((t))\,dt\oplus\C d$ be the
restricted dual to $\hat\fg$, where $d=\mathbf{1}^*$. Its
subset $\conn:=\fg^*\otimes\C((t))\,dt+d$ is identified
with the set of connections on the trivial $G$-bundle over the
punctured formal disc. Set
\[
    \conn_{N,\fh}:=\Bigl\{\nabla\in\conn:\;
    \nabla=d+\sum_{n\ge-N-1}A_nt^n\,dt,
    A_{-N-1}\in\fh^{*,r}\Bigl\}.
\]
We can view $\U_k\hat\fg$ and $\MMR$ as $\C[k]$-modules.
Choosing appropriate $\C[k^{-1}]$-lattices in
$\U_k\hat\fg\otimes\C[k,k^{-1}]$ and $\MMR\otimes\C[k,k^{-1}]$,
we extend $\U_k\hat\fg$ and $\MMR$ to $k=\infty$ (this is
similar to~\cite[\S16.3.2]{FrenkelBenZviBook}). Then one
identifies $\U_\infty\hat\fg$ with the algebra of functions on
$\conn$, and $\mathbb M_{N,\infty,reg}$ with the algebra of
functions on $\conn_{N,\fh}$. Note that $\conn$ is a Poisson
subspace of $\hat\fg^*$. Set $G_+:=\exp(\fgp)$; then
$\conn_{N,\fh}/G_+$ is an open subspace of the hamiltonian
reduction $\conn//G_+$. The following statement is very close
to~\cite[Proposition~5]{FedorovIsoStokes} and to standard
theorems about normal forms of connections.
\begin{lemma}\label{FormalNormalForm}
Every $G_+$-orbit in $\conn_{N,\fh}$ contains a unique element
of the form
\[
d+\sum_{n=-N-1}^{N-1}A_nt^ndt,
\]
where $A_{-N-1}\in\fh^{*,r}$, $A_n\in\fh$ for $n\ge-1$.
\end{lemma}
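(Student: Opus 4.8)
The plan is to prove existence and uniqueness of the normal form by a triangular, order-by-order gauge transformation using $G_+=\exp(\fgp)$, reducing the coefficients $A_n$ for $-N\le n\le N-1$ to the Cartan subalgebra while leaving the ``irregular leading term'' $A_{-N-1}\in\fh^{*,r}$ untouched. Recall that $\fgp=t^N(\fu_-\oplus\fu)\oplus t^{N+1}\fg[[t]]$, so an element of $G_+$ is $\exp(X)$ with $X=\sum_{m\ge N}X_mt^m$, where $X_N\in\fu_-\oplus\fu$ and $X_m\in\fg$ for $m\ge N+1$. The action on a connection $\nabla=d+A(t)\,dt$ is $\nabla\mapsto d+(\Ad_{\exp X}A-\exp(X)\,\partial_t\exp(-X))\,dt$; since $X$ has no Cartan component in its lowest degree and vanishes to order $N$, this is a well-defined (pro-unipotent) action on $\conn_{N,\fh}$.

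First I would set up the recursion. Write the starting connection as $d+\sum_{n\ge-N-1}A_nt^n\,dt$ with $A_{-N-1}\in\fh^{*,r}$ fixed. We seek $X=\sum_{m\ge N}X_mt^m$ and will determine $X_m$ inductively so that after conjugation the coefficient of $t^{m-N-1}$ (for $m\ge N$, i.e. degrees from $-1$ up to $N-1$) lies in $\fh$ and so that all coefficients of degree $\ge N$ are killed. The key computation is that the lowest-order effect of $\exp(\ad_X)$ on $A(t)$ at a given step is the bracket $[X_m,\,A_{-N-1}]\,t^{m-N-1}$ coming from the irregular leading term, modulo lower-order-in-$X$ terms already fixed by previous steps. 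Using the root-space decomposition and the fact that $\ad_{A_{-N-1}}\colon\fu_-\oplus\fu\to\fu_-\oplus\fu$ is invertible precisely because $A_{-N-1}\in\fh^{*,r}$ (here I identify $\fh^*$ with $\fh$ via the form, so $A_{-N-1}(h_\alpha)\ne0$ for all $\alpha$), at each step $m$ we can: (i) choose the $\fu_-\oplus\fu$ part of $X_m$ uniquely to cancel the off-Cartan part of the degree-$(m-N-1)$ coefficient (the Cartan part of that coefficient is then whatever it is — that is the $A_{m-N-1}\in\fh$ we keep), and (ii) for $m\ge N+1$ additionally choose the $\fh$-part of $X_m$ — wait, this is where care is needed.

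The main obstacle is bookkeeping the two separate ``jobs'' of $X$: clearing the non-Cartan parts in degrees $-1,\dots,N-1$ and clearing \emph{everything} in degrees $\ge N$. Since $X_N$ lives only in $\fu_-\oplus\fu$, it can only fix the off-Cartan part in degree $-1$; the Cartan part in degree $-1$ is genuinely not removable (consistent with the statement, which allows $A_{-1},\dots,A_{N-1}\in\fh$). For $m\ge N+1$, $X_m$ ranges over all of $\fg$, so its off-Cartan part handles degree $m-N-1\in\{0,\dots,N-1\}$ and then — once we reach degrees $\ge N$ — the full $X_m$ (all of $\fg$, including Cartan) is available, and the map $Y\mapsto[Y,A_{-N-1}]$ is \emph{not} surjective onto $\fg$ (its image is $\fu_-\oplus\fu$), so one cannot kill the Cartan part of high-degree coefficients by the leading bracket alone. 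The resolution: the Cartan part of the degree-$j$ coefficient for $j\ge N$ is adjusted instead via the $d+\partial_t$ term, $\exp(X)\partial_t\exp(-X)=\partial_t X+\tfrac12[\partial_tX,X]+\cdots = \sum_m m\,X_m t^{m-1}+\cdots$, whose leading contribution in degree $j=m-1$ is $m\,X_m$ and \emph{does} hit the Cartan direction. So one runs the recursion treating, in each degree $d_0$, the contributions $[X_{d_0+N+1},A_{-N-1}]$ (off-Cartan, from conjugation) and $(d_0+1)X_{d_0+1}$ (all of $\fg$, from the connection term) together; for $d_0\le N-1$ only the first is available and leaves the Cartan part free, for $d_0\ge N$ both combine and one solves for $X_{d_0+1}$ and (the off-Cartan part of) $X_{d_0+N+1}$ to force the coefficient to $0$. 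I would organize this as a single induction on the total degree, at each step solving a linear system whose solvability and uniqueness follow from invertibility of $\ad_{A_{-N-1}}$ on $\fu_-\oplus\fu$ together with the invertibility of multiplication by a nonzero integer; uniqueness of $X$ (hence of the orbit representative) then follows because at every step the relevant linear map is a bijection on the space of available parameters.

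Finally I would remark that this is exactly the content of the cited \cite[Proposition~5]{FedorovIsoStokes} and of the classical Hukuhara--Turrittin--type normal form for irregular connections in the ``non-resonant'' (here: regular leading term) case, so one may alternatively just quote it; but the self-contained recursion above is short enough to include. The hard part, to reiterate, is not any single calculation but correctly matching up which component of which $X_m$ removes which coefficient — i.e. verifying that the two ``clearing mechanisms'' ($\ad_{A_{-N-1}}$ on $\fu_-\oplus\fu$, and $\partial_t$ on $\fh$) together span what is needed in every degree, with the degrees $-1,\dots,N-1$ in the Cartan direction being precisely the ones no mechanism touches.
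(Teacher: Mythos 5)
Your proposal is correct and is essentially the argument the paper has in mind: the paper itself gives no proof of this lemma, deferring to \cite[Proposition~5]{FedorovIsoStokes} and to ``standard theorems about normal forms of connections,'' and your order-by-order recursion (clearing non-Cartan components via the invertibility of $\ad_{A_{-N-1}}$ on $\fu_-\oplus\fu$, and Cartan components in degrees $\ge N$ via the $\partial_t X$ term) is exactly that standard argument, with the degree bookkeeping done correctly. The only loose phrase is that the nonlinear cross-terms at a given degree are not all ``already fixed by previous steps,'' but since the resulting system in each degree is block-triangular with invertible diagonal blocks, your stated conclusion stands.
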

At $k=\infty$ the action of $\fgp$ on $\MMR$ becomes a
hamiltonian action of $\fgp$ on $\mathbb M_{N,\infty,reg}$.
Thus $(\mathbb M_{N,\infty,reg})^\fgp=\Fun(\conn_{N,\fh}/G_+)$
is the quasi-classical limit of
$\End_{\hat\fg}(\MMR)=(\MMR)^\fgp$. Thus the above lemma shows
that our (semi-conjectural) answer for $\End_{\hat\fg}(\MMR)$
has ``the right size''. One can show that the ``easy''
endomorphisms (see remark after Theorem~\ref{ThEnd}) correspond
to~$A_n$ with $n\le-1$.

Note also that the injection of Theorem~\ref{ThEnd} provides
Darboux coordinates on $\conn_{N,\fh}/G_+$.

\subsection{Global version}\label{GLOBAL}
Let $G((t))$ be the loop group of $G$, let $G_-\subset G$
denote the subgroup of loops that can be extended to a
$G$-valued function on $\Pro^1\setminus0$ that is equal to 1 at
infinity.

Set
\[
\Bun:=G_-\backslash G((t))/G_+.
\]
Then $\Bun$ is the moduli space of $G$-bundles on $\Pro^1$ with
the fiber at $\infty$ trivialized, and with a certain higher
level structure at zero. More precisely, it is a principal
$\fh$-bundle over the moduli space of $G$-bundles with the
fiber at $\infty$ trivialized and with the fiber at zero
trivialized to order $N-1$ (we view $\fh$ as an abelian Lie
group).

\begin{remark}
The space $\Bun$ looks a little bit unnatural. It would be more
reasonable to look at the moduli space of bundles trivialized
to order $N$ at zero, which would correspond to
$\fgp=t^{N+1}\fg[[t]]$, or having some higher order unipotent
structure, which would correspond to $\fgp=\fu[[t]]+
t^N\fg[[t]]$ (cf. the last remark after Theorem~\ref{ThEnd}).
\end{remark}

The determinant line bundle on $\Bun$ yields a 1-parametric
family of TDO, which we denote by $\Dmod_k(\Bun)$. It is easy
to see that each element of $\End_{\hat\fg}(\MM)$ gives rise to
a global section of $\Dmod_k(\Bun)$ (similar
to~\cite[Corollary~2.4.3]{FrenkelBenZvi}). We conjecture that
this construction gives all global level $k$ differential
operators on $\Bun$ for $k\ne k_c$. This conjecture is related
to a known fact that there are no non-constant differential
operators on the moduli space of bundles for $k\ne k_c$.

We expect similar statements to hold for other level groups.

\subsection{Recollection on isomonodromic deformation}
Let $\X\to S$ be a family of smooth projective curves,
$(\E,\nabla)$ be a family of bundles with connections.
Precisely,~$\E$ is a principal $G$-bundle on $\X$, $\nabla$ is
a~relative connection on $\E$ along the fibers of $\X\to S$.
This family is called \emph{isomonodromic}, if $\nabla$ can be
extended to an \emph{absolute flat\/} connection $\tilde\nabla$
on $\X$.

It is an easy exercise that this condition is equivalent to the
monodromy of~$\nabla$ being constant over $S$.

In the case of a connection with singularities the above
definition still makes sense. In the case of regular
singularities, the condition is still equivalent to the
monodromy being constant; in the case of irregular
singularities one has to require that both monodromy and the
so-called Stokes structures at the singularities do not change.
In this case a new direction of deformation arises: one can
deform \emph{formal normal forms\/} of connections as well. (We
assume the singularities to be generic.) We refer the reader
to~\cite{FedorovIsoStokes} for more details.

In the simplest case of connections on $\Pro^1$ with a~pole of
order 2 at zero, and a~pole of order 1 at infinity, the
isomonodromic deformation amounts to deforming the conjugacy
class of the leading term at zero. This isomonodromic
deformation is especially important because it is closely
related to Frobenius manifolds.

\subsection{Deformation to isomonodromy}\label{ISOMONODROMY}
The sheaf $\Dmod_k(\Bun)$ has a natural deformation to a
certain twisted cotangent bundle on $\Bun$. Precisely, this
twisted cotangent bundle is the moduli space of pairs
$(E,\nabla)$, where $E\in\Bun$, $\nabla$ is a connection on $E$
with a simple pole at $\infty$ such that $\nabla$ has the
following form at zero (in a trivialization compatible with the
level structure):
\[
d+dt\left(\frac h{t^{N+1}}+\mbox{higher order
terms}\right),\qquad
h\in\fh^*.
\]
Denote this moduli space by $\Conn$ (this is a moduli space of
extended connections), denote by $\ConnReg$ the open subspace
of $\Conn$ corresponding to $h\in\fh^{*,r}$. Trivializing a
bundle in the formal neighborhood of zero (this trivialization
is well defined up to the action of $G_+$), we get a natural
map $\ConnReg\to\conn_{N,\fh}/G_+$. The following statements
are very close to the results of~\cite{FedorovIsoStokes}:
\begin{enumerate}
\item $\ConnReg$ is a Poisson space.
\item The map $\ConnReg\to\conn_{N,\fh}/G_+$ is Poisson.
\item The pullbacks of functions along this map are
    hamiltonians of isomonodromic deformation (compare
    with~\cite[Theorem~2]{FedorovIsoStokes}).
\end{enumerate}
In~\S\ref{GLOBAL} we have constructed some global sections
of~$\Dmod_k(\Bun)$. Comparing with~\S\ref{Isostokes}, we see
that these sections are quantizations of isomonodromic
hamiltonians.

Note, that similar results are valid for any smooth projective
curve~$X$ and for any level structure divisor.

\section{Proofs of main results}
\subsection{Finite dimensional preliminaries} Recall first the
definition of polynomials~$P_\beta^i$ and $Q_\beta^i$
from~(\ref{ffr}). Recall that $U$ is identified with an open
subset of $G/B_-$, thus~$\fg$ acts on $U$. Also, $U$ is
isomorphic to $\fu$ as a variety. Fix a homogeneous coordinate
system $y_\alpha$, $\alpha\in\Delta_+$ on $U$. Here
\emph{homogeneous\/} means that
\begin{equation}\label{homo}
h\cdot y_\alpha=-\alpha(h)y_\alpha \forall\;h\in\fh,
\alpha\in\Delta_+.
\end{equation}
Then (possibly after multiplying coordinates $y_{\alpha_i}$ by
some constants) the action is given by
(cf.~\cite[\S5.2.5]{FrenkelBook})
\begin{equation}\label{findim}
\begin{split}
e_i\mapsto&\frac{\partial}{\partial
y_{\alpha_i}}+\sum_{\beta\in\Delta_+}
P_\beta^i(y_\alpha)\frac{\partial}{\partial y_\beta},\\
h_i\mapsto&-\sum_{\beta\in\Delta_+}
\beta(h_i)y_\beta\frac{\partial}{\partial y_\beta},\\
f_i\mapsto&\sum_{\beta\in\Delta_+}
Q_\beta^i(y_\alpha)\frac{\partial}{\partial y_\beta}.
\end{split}
\end{equation}
The Cartan subalgebra acts on $\fg$ and on $\Dmod(U)$
diagonally, so we can define the weight $w_\fh$ with respect to
the Cartan subalgebra. Clearly $w_\fh(e_\alpha)=\alpha$,
$w_\fh(h_i)=0$, $w_\fh(f_\alpha)=-\alpha$. Also,~(\ref{homo})
implies that $w(y_\alpha)=-\alpha$, $w(\partial/\partial
y_\alpha)=\alpha$. The action preserves $\fh$-weight. Later we
shall need the following
\begin{lemma}\label{LmEAlpha}
Multiplying the coordinates $y_\alpha$ by certain non-zero
constants, we may assume that for all $\alpha\in\Delta_+$
\begin{equation*}
e_\alpha\mapsto\frac{\partial}{\partial
y_\alpha}+\sum_{\substack{\beta\in\Delta_+\\ \beta>\alpha}}
P_\beta^\alpha(y_\gamma)\frac{\partial}{\partial y_\beta},
\end{equation*}
where $P_\beta^\alpha$ are certain polynomials without constant
terms.
\end{lemma}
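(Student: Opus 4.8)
The plan is to build up the desired formula for $e_\alpha$ by downward induction on the height of $\alpha$, using the $\fh$-weight grading and the $\fsl_2$-type structure to control which coordinates can appear. First I would record two structural facts about the action \eqref{findim} on $\Dmod(U)$: that it is homogeneous of $\fh$-weight $\alpha$ on $e_\alpha$ (so $e_\alpha$ is a sum of terms $c\, y_{\gamma_1}\cdots y_{\gamma_r}\,\partial/\partial y_\beta$ with $\beta = \alpha + \gamma_1 + \cdots + \gamma_r$, each $\gamma_j\in\Delta_+$), and that the constant (degree-zero in the $y$'s) part of $e_\alpha$ is a constant-coefficient vector field $\sum_\beta \lambda^\alpha_\beta\,\partial/\partial y_\beta$. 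The key point is that $\beta = \alpha + (\text{sum of positive roots})$ forces $\beta \geq \alpha$ in the root order, and $\beta = \alpha$ only for the purely constant term $\lambda^\alpha_\alpha\,\partial/\partial y_\alpha$; a term with $\partial/\partial y_\alpha$ and a nonconstant coefficient is impossible since its coefficient would have weight $0$, i.e. be a constant. So the shape $\partial/\partial y_\alpha + \sum_{\beta>\alpha} P^\alpha_\beta(y)\,\partial/\partial y_\beta$ is automatic once we know $\lambda^\alpha_\alpha \neq 0$ and $\lambda^\alpha_\beta = 0$ for $\beta\neq\alpha$ — that is, once the constant part of $e_\alpha$ is a nonzero multiple of $\partial/\partial y_\alpha$ alone.

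Thus everything reduces to normalizing the \emph{linearization} at the base point $1\in U$, i.e. the map $\fu \to \fu = T_1 U$, $x\mapsto (\text{constant part of the vector field }x)$. This is exactly the linear isotropy action, and I claim it is (up to the identification $\fu\cong T_1U$) the identity — or at least upper-triangular and invertible with respect to the root order. Indeed $U \subset G/B_-$ and the $U$-action on $U$ by left translation is simply transitive, so its derivative $\fu \to T_1 U$ is an isomorphism; in the coordinates $y_\alpha$, which are homogeneous of weight $-\alpha$, this isomorphism is block-diagonal by weight, hence diagonal: $e_\alpha \mapsto \mu_\alpha\,\partial/\partial y_\alpha$ with $\mu_\alpha\neq 0$. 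Rescaling $y_\alpha \mapsto \mu_\alpha^{-1} y_\alpha$ (note this is compatible with, and refines, the rescaling already allowed in \eqref{findim} and in Lemma \ref{LmEAlpha}) makes $\mu_\alpha = 1$ for every $\alpha\in\Delta_+$ simultaneously. One must check that this global rescaling does not destroy the normalization $e_i \mapsto \partial/\partial y_{\alpha_i} + \cdots$ already built into \eqref{findim}: but for $\alpha = \alpha_i$ simple the coordinate $y_{\alpha_i}$ is being multiplied by $\mu_{\alpha_i}^{-1}$, and \eqref{findim} only determined $y_{\alpha_i}$ up to a constant, so we are free to fix that constant now.

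The main obstacle is not any single computation but making sure the two normalizations are consistent: \eqref{findim} fixes the $e_i$ up to rescaling the $y_{\alpha_i}$, while Lemma \ref{LmEAlpha} wants \emph{all} $e_\alpha$, $\alpha\in\Delta_+$, normalized, which rescales \emph{all} the $y_\alpha$. I would handle this by reversing the logic: start from the intrinsic left-translation action of $U$ on itself, choose homogeneous coordinates $y_\alpha$ in which the linear isotropy is the identity (possible and essentially unique up to weight-diagonal automorphisms of $\fu$, which for a simple $\fg$ is governed by $Z(\fh)$ and causes no trouble), derive the formula of Lemma \ref{LmEAlpha} for every $\alpha$, and only then observe that specializing to $\alpha = \alpha_i$ recovers \eqref{findim}. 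The other thing to verify carefully is that the coefficients $P^\alpha_\beta$ genuinely involve only the $y_\gamma$ and are polynomial — this follows because the $U$-action on the affine space $U\cong\fu$ is algebraic (it is the restriction of the $G$-action on $G/B_-$ to the big cell) and unipotent, so each $e_\alpha$ acts by a polynomial vector field, and the weight constraint $\beta = \alpha + \sum\gamma_j$ with all $\gamma_j$ positive bounds the degree, confirming $P^\alpha_\beta$ is a genuine polynomial without constant term.
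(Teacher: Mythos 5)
Your proof is correct and follows essentially the same route as the paper's: decompose $e_\alpha$ into its constant-coefficient and higher-order parts, use the $\fh$-weight grading to show the constant part is diagonal and that the nonconstant terms involve only $\partial/\partial y_\beta$ with $\beta>\alpha$, invoke transitivity of the $U$-action at the unit to see the diagonal entries are nonzero, and then rescale the coordinates. The extra care you take in reconciling this global rescaling with the normalization already built into~(\ref{findim}) is a point the paper leaves implicit but is handled correctly here.
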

\begin{proof}
We can write
\[
e_\alpha\mapsto
\sum_{\beta\in\Delta_+} c_\beta^\alpha\frac{\partial}{\partial
y_\beta}+\sum_{\beta\in\Delta_+}
P_\beta^\alpha(y_\gamma)\frac{\partial}{\partial y_\beta},
\]
where $c_\beta^\alpha$ are certain constants, $P_\beta^\alpha$
are polynomials without constant terms. The $\fh$-weight
considerations show that $c^\alpha_\beta$ is a diagonal matrix
and that $P_\beta^\alpha=0$ unless $\beta>\alpha$\,;
$c^\alpha_\beta$ is non-degenerate because the action of $\fu$
on $U$ is transitive, so in particular it is transitive at the
unit.
\end{proof}
\subsection{}
\emph{Proof of part {\rm(}a\/{\rm)} of
Theorem~\ref{ThFreeField}.} Let $V_k(\fg)$ be the level $k$
vacuum representation of $\hat\fg$ with its usual structure of
vertex algebra (see~\cite[\S2.2.4]{FrenkelBook}). Let $M_\fg$
be the Fock representation of $\mathcal A$,
see~\cite[\S5.4.1]{FrenkelBook}. Let $\pi_0^k$ be the vertex
algebra associated to $\hat\fh_k$. Then
by~\cite[Theorem~6.2.1]{FrenkelBook} there is a vertex algebra
homomorphism $V_k(\fg)\to M_\fg\otimes\pi_0^{k-k_c}$ given by
formulae~(\ref{ffr}).

It is clear that $M_N$ is a smooth representation of $\mathcal
A$, thus $M_N$ is a module over the vertex algebra $M_\fg$ with
the $M_\fg$-module structure induced from its $\mathcal
A$-module structure (see for
example~\cite[\S5.1.8]{FrenkelBenZviBook}). Similarly,
$\hat\pi_N^{k-k_c}$ is a module over the vertex algebra
$\pi_0^{k-k_c}$ with $\pi_0^{k-k_c}$-module structure induced
from $\hat\fh^{k-k_c}$-module structure. It follows
that~(\ref{ffr}) give a $V_k(\fg)$-module structure on
$M_N\otimes\hat\pi_N^{k-k_c}$. But this is equivalent to our
statement.\qed

\begin{proof}[Proof of part {\rm(}b{\rm)} of Theorem~\ref{ThFreeField}]
Since $\Sym(t^N\fh)$ acts on $\fh^{k-k_c}$-module
$\hat\pi_N^{k-k_c}$, it also acts on
$M_N\otimes\hat\pi_N^{k-k_c}$ by endomorphisms of $\mathcal
A\otimes\hat\fh^{k-k_c}$-module structure, thus it acts by
endomorphisms of the $\hat\fg$-module structure and the statement
follows.
\end{proof}

\subsection{Action of $\hat\fg$ revisited.}
Let us extend the $\fh$-weight to loop algebras, by setting
\[
\begin{split}
    w_\fh(e_{\alpha,n})&=w_\fh(a_{\alpha,n})=\alpha,\\
    w_\fh(f_{\alpha,n})&=w_\fh(a^*_{\alpha,n})=-\alpha,\\
    w_\fh(h_{i,n})&=w_\fh(b_{i,n})=0.
\end{split}
\]
Then~(\ref{ffr}) shows that the above action of $\hat\fg$ is
compatible with the $w_\fh$ gradation.

We would like to extend the formulae~(\ref{ffr}) to
$e_\alpha(z)$ and $f_\alpha(z)$, where $\alpha$ is not
necessarily a simple root. Let $b_\alpha(z)$ be the field
corresponding to $h_\alpha\in\fh$ (i.e. $b_\alpha(z)$ is a
linear combination of $b_i(z)$ with the same coefficients as in
the expression of $h_\alpha$ through $h_i$).

\begin{proposition}\label{PrAllRoots}
The action of $\hat\fg$ on $M_N\otimes\hat\pi_N^{k-k_c}$ is
given by
\begin{equation}\label{ffralpha}
\begin{split}
e_\alpha(z)\mapsto&\,a_{\alpha}(z)+\sum_{\substack{\beta\in\Delta_+\\
\beta>\alpha}}
{:}P_\beta^\alpha(a^*(z))a_\beta(z){:},\\
h_i(z)\mapsto&-\sum_{\beta\in\Delta_+}
\beta(h_i){:}a_\beta^*(z)a_\beta(z){:}+b_i(z),\\
f_\alpha(z)\mapsto&\sum_{\beta\in\Delta_+}
{:}Q_\beta^\alpha(a^*(z))a_\beta(z){:}+
\sum_{\beta\in\Delta_+}
\tilde Q_\beta^\alpha(a^*(z))\partial_z a^*_\beta(z)+\\
&b_\alpha(z)a^*_\alpha(z)+\sum_i
b_i(z)R_i^\alpha(a^*(z)),
\end{split}
\end{equation}
where\,{\rm:}
\begin{itemize}
  \item $P_\beta^\alpha$ are polynomials in $a^*_\gamma$
      without constant terms\smcn
  \item $Q^\alpha_\beta$ are polynomials in $a^*_\gamma$
      without constant terms\smcn
  \item $R_i^\alpha$ are polynomials in $a^*_\gamma$
      without constant and linear terms.
\end{itemize}
\end{proposition}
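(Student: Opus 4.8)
The plan is to prove Proposition~\ref{PrAllRoots} by induction on the height of $\alpha$. For $\mathrm{ht}\,\alpha=1$ the statement is exactly formula~(\ref{ffr}) (for $e_i(z)$, $h_i(z)$, $f_i(z)$), refined on the $e$-side by Lemma~\ref{LmEAlpha}, which after the normalisation of the coordinates $y_\alpha$ (equivalently, of the generators $a^*_{\alpha,n}$) truncates the sum in $e_i(z)$ to $\beta>\alpha_i$. For the inductive step, given $\alpha$ of height $\ge2$, choose a simple root $\alpha_i$ with $\alpha':=\alpha-\alpha_i\in\Delta_+$ (a standard fact); then $[e_i,e_{\alpha'}]$ and $[f_i,f_{\alpha'}]$ are nonzero multiples of $e_\alpha$ and $f_\alpha$ in $\fg$. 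Since the invariant form pairs $\fu$ with $\fu_-$, it vanishes on $\fu\times\fu$ and on $\fu_-\times\fu_-$, so $(e_i,e_{\alpha'})=(f_i,f_{\alpha'})=0$; hence the operator products $e_i(z)e_{\alpha'}(w)$ and $f_i(z)f_{\alpha'}(w)$ (computed inside the vertex algebra $M_\fg\otimes\pi_0^{k-k_c}$ of Theorem~\ref{ThFreeField}(a), hence valid on $M_N\otimes\hat\pi_N^{k-k_c}$) have no second-order pole, and $e_\alpha(w)$, resp.\ $f_\alpha(w)$, is a nonzero scalar times the coefficient of $(z-w)^{-1}$. These coefficients are evaluated by Wick's theorem from the inductively known expressions, the only nonzero contractions among the generating fields being $a_\gamma(z)a^*_\gamma(w)\sim(z-w)^{-1}$ (which removes one $a$- and one $a^*$-field) and $b_i(z)b_j(w)\sim(z-w)^{-2}$ (coming from the central term of~(\ref{binbjm})).

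The case of $e_\alpha(z)$ is immediate. Assign to a polynomial in the modes its \emph{$a$-order}, the maximal number of factors $a_{\gamma,n}$ in a monomial. The inputs $e_i(z)$, $e_{\alpha'}(w)$ have $a$-order $\le1$, involve no $b$-fields and no derivatives of the $a$-fields; since a first-order pole requires at least one contraction and the only order-one contraction is $a$–$a^*$ (which lowers the $a$-order), the coefficient of $(z-w)^{-1}$ has $a$-order $\le1$ and no $b$-fields. But its $\fh$-weight is $\alpha>0$, which cannot be produced with fewer than one $a$-field; in particular every contribution of pole order $\ge2$ (hence every term with a derivative) vanishes, and each surviving term carries exactly one undifferentiated $a$-field. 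Thus $e_\alpha(z)=\sum_\beta{:}P_\beta^\alpha(a^*(z))a_\beta(z){:}$; $\fh$-weight forces $\beta\ge\alpha$ with $P_\beta^\alpha$ a constant when $\beta=\alpha$ and of positive degree when $\beta>\alpha$. Moreover, upon the substitution $a^*_\gamma\leftrightarrow y_\gamma$, $a_\beta\leftrightarrow\partial/\partial y_\beta$, the recursion for this $a$-order-one part is literally the Lie-bracket recursion $e_\alpha=N^{-1}[e_i,e_{\alpha'}]$ for vector fields on $U$ (the discrepancy being only the weight-killed terms), so the affine $P_\beta^\alpha$ are the finite-dimensional polynomials of Lemma~\ref{LmEAlpha}; this pins the $a_\alpha(z)$-coefficient to $1$ and restricts the remaining sum to $\beta>\alpha$. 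Nothing is needed for $h_\alpha(z)$: writing $h_\alpha=\sum_i\lambda_i^\alpha h_i$, formula~(\ref{ffr}) gives $h_\alpha(z)=\sum_i\lambda_i^\alpha h_i(z)=-\sum_\beta\beta(h_\alpha){:}a^*_\beta(z)a_\beta(z){:}+b_\alpha(z)$.

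The substantial case is $f_\alpha(z)$. Split each $f_i(z)$, and inductively $f_{\alpha'}(z)$, into three types: (F) $a$-order-one terms ${:}(\text{polynomial in }a^*)\,a_\beta{:}$; (D) derivative terms ${:}(\text{polynomial in }a^*)\,\partial_z a^*_\beta{:}$; and (B) $b$-linear terms $b_j(z)\,(\text{polynomial in }a^*)$. Running Wick's theorem on the $(z-w)^{-1}$-coefficient of $f_i(z)f_{\alpha'}(w)$ and bookkeeping $a$-order, $b$-count and pole order, one checks that the span of these three types is closed: a single (F)$\cdot$(F) contraction stays in (F); a double (F)$\cdot$(F) contraction and the (F)$\cdot$(D), (D)$\cdot$(F) contractions produce an order-two pole, hence a single derivative, landing in (D); (F)$\cdot$(B) and (B)$\cdot$(F) contractions remove the $a$-field and leave the $b_j$, landing in (B); (B)$\cdot$(B) contractions act through the central term of~(\ref{binbjm}), producing an order-two pole with two $a^*$-fields and no $b$, i.e.\ a (D)-term; and (D)$\cdot$(D), (D)$\cdot$(B), (B)$\cdot$(D) are zero. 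One sees along the way that $a$-order, $b$-count and derivative order are each $\le1$ and never simultaneously positive, so indeed $f_\alpha(z)=\sum_\beta{:}Q_\beta^\alpha(a^*(z))a_\beta(z){:}+\sum_\beta{:}\tilde Q_\beta^\alpha(a^*(z))\partial_z a^*_\beta(z){:}+\sum_j b_j(z)S_j^\alpha(a^*(z))$. It remains to normalise the coefficients. $\fh$-weight gives that $Q_\beta^\alpha$ and $S_j^\alpha$ have no constant term and that the linear part of each $S_j^\alpha$ is a multiple of $a^*_\alpha$; the vanishing of the constant terms of $Q_\beta^\alpha$ also follows because the $a$-order-one part, read in the coordinates $y_\gamma$, is the finite-dimensional vector field~(\ref{findim}) for $f_\alpha$, which vanishes at the base point of $U\subset G/B_-$ (as $f_\alpha\in\fb_-$). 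Finally, to see that the linear-in-$a^*$, $b$-linear part of $f_\alpha(z)$ is exactly $b_\alpha(z)a^*_\alpha(z)$ — equivalently that $R_j^\alpha:=S_j^\alpha-\lambda_j^\alpha a^*_\alpha$ has no constant and no linear term — compute the coefficient of $(z-w)^{-1}$ in $e_\alpha(z)f_\alpha(w)$: by $[e_\alpha,f_\alpha]=h_\alpha$ (and $(e_\alpha,f_\alpha)\ne0$, accounting for the $(z-w)^{-2}$-term) it equals $h_\alpha(w)$, whose $b$-linear, $a^*$-degree-zero, $a$-order-zero part is $b_\alpha(w)=\sum_j\lambda_j^\alpha b_j(w)$; in the Wick expansion such a term can only come from contracting the $a_\alpha(z)$ of $e_\alpha(z)$ against a linear $a^*_\alpha$ inside the $b$-terms of $f_\alpha(w)$, so the coefficient of $a^*_\alpha$ in $S_j^\alpha$ must be $\lambda_j^\alpha$, which is the assertion.

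The main obstacle is the $f_\alpha(z)$ analysis: verifying the closure of the three-type ansatz under the operator product (the contraction combinatorics among the (F), (D), (B) pieces, in particular the feedback of the central term of~(\ref{binbjm}) into the derivative terms), and pinning down the low-degree behaviour of the correction polynomials $R_j^\alpha$, where the $\fh$-weight grading must be supplemented both by the comparison with the finite-dimensional realisation of Lemma~\ref{LmEAlpha} and~(\ref{findim}) and by the $\fsl_2$-consistency $[e_\alpha,f_\alpha]=h_\alpha$. By contrast, the $e_\alpha(z)$ and $h_\alpha(z)$ parts of~(\ref{ffralpha}) are essentially forced by the $a$-order grading together with positivity of the weight.
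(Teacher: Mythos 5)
Your proof is correct, but for the crucial $f_\alpha$-part it takes a genuinely different route from the paper's. The paper runs no induction on height for $f_\alpha$ and no Wick bookkeeping: it simply observes that the image of $f_\alpha(z)$ must be the field attached, via the vertex operation, to a conformal-weight-one vector of $M_\fg\otimes\pi_0^{k-k_c}$, and that the most general such field is exactly $\sum_\beta{:}Q_\beta(a^*(z))a_\beta(z){:}+\sum_\beta\tilde Q_\beta(a^*(z))\partial_z a^*_\beta(z)+\sum_i b_i(z)R'_i(a^*(z))$ --- one structural sentence replacing your entire verification that the (F)/(D)/(B) ansatz is closed under the OPE residue (including the feedback of the $b$--$b$ central term of~(\ref{binbjm}) into the derivative terms). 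After that the two arguments converge: both use $\fh$-weight to kill the constant terms of $Q_\beta^\alpha$ and of the $b$-coefficients and to force the linear part of the latter to be a multiple of $a^*_\alpha(z)$, and both pin that multiple down from $\Res_w[e_\alpha(z),f_\alpha(w)]\,dw=h_\alpha(z)$ by matching the pure-$b$ part with $b_\alpha(z)$. For $e_\alpha(z)$ the paper again avoids the OPE computation, quoting the finite-dimensional/classical picture (Lemma~\ref{LmEAlpha} together with the discussion around Theorem~6.1.3 of~\cite{FrenkelBook}) and then noting that the images of $e_{\alpha,n}$ survive the $\pi_0$- and level-deformations unchanged because they are obtained by commuting the $e_{\alpha_i,n}$ and contain no $b$-fields --- the same commutator observation that drives your induction, used there only to establish $k$-independence. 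What your approach buys is self-containedness and an explicit mechanism showing where each term of~(\ref{ffralpha}) originates; what the paper's buys is brevity, at the price of invoking the a priori classification of dimension-one fields in the target vertex algebra.
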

\begin{proof}
Recall from~\cite{FrenkelBook} how the homomorphism~(\ref{ffr})
is constructed. One starts with~(\ref{findim}) and extends it
to a map $\fg\otimes\C((t))\to\mathrm{Vect}(U((t)))$, where
$U((t))$ is the ind-scheme of loops in $U$,
\cite[\S5.3.2]{FrenkelBook}. The next step is to lift this map
to a map $V_{k_c}(\fg)\to M_\fg$, see~\cite{FrenkelBook},
Theorem~5.6.8 and Theorem~6.1.3.

It follows from Lemma~\ref{LmEAlpha} and discussion before
Theorem~6.1.3 in~\cite{FrenkelBook} that under this map
\begin{equation}\label{ealpha}
    e_\alpha(z)\mapsto\sum_{\beta\in\Delta_+}
    a_\alpha(z)+\sum_{\beta\in\Delta_+}
{:}P_\beta^\alpha(a^*(z))a_\beta(z){:}.
\end{equation}

The map $V_{k_c}(\fg)\to M_\fg$ can be deformed by any element
of $\fh^*((t))$, giving rise to a map $V_{k_c}(\fg)\to
M_\fg\otimes\pi_0$ but the images of $e_\alpha(z)$ are not
modified (see proof of~\cite[Lemma~6.1.4]{FrenkelBook}). The
last step is to deform the level to $k\ne k_c$. However, we
claim that the images of $e_{\alpha,n}$ do not depend on $k$.
Indeed, for $\alpha=\alpha_i$ it is clear from~(\ref{ffr}). All
other $e_{\alpha,n}$ can be obtained by commuting
$e_{\alpha_i,n}$. Finally, the fields~$b_i(z)$ are absent in
the RHS of~(\ref{ealpha}), so the result of commuting does not
depend on~$k$. This proves the first formula in the
proposition.

To calculate the images of $f_\alpha(z)$ we shall take a
different approach. First we show that
\[
f_\alpha(z)\mapsto\sum_{\beta\in\Delta_+}
{:}Q_\beta^\alpha(a^*(z))a_\beta(z){:}+
\sum_{\beta\in\Delta_+}
\tilde Q_\beta^\alpha(a^*(z))\partial_z a^*_\beta(z)+
\sum_i b_i(z)R'_i(a^*(z)).
\]
Indeed, the vertex algebra formalism shows that RHS has to be a
field, corresponding to some element of
$M_\fg\otimes\pi_0^{k-k_c}$ via the vertex operation. It also
has to be a~field of conformal dimension one. But it is easy to
see that this is the most general form of such a field. The
considerations of $\fh$-weight show that $R'_i$  and
$Q^\alpha_\beta$ have no constant terms. The same considerations
show that the linear term of $R'_i$ is of the form $\lambda_i
a_\alpha^*(z)$.

Next, we have
\[
    \Res_w[e_\alpha(z),f_\alpha(w)]dw=h_\alpha(z).
\]
The operator corresponding to LHS has the form
\[
\sum_i\lambda_i b_i(z)+\sum_i b_i(z)R''_i(a^*(z))+
\mbox{Terms without $b_i(z)$},
\]
where $R''_i$ have no constant terms. Comparing with the left
hand side we conclude that $\sum_i\lambda_i
b_i(z)=b_\alpha(z)$.
\end{proof}

\subsection{Constructing a map $\wp:\MMR\to
M_N\otimes\pireg{k-k_c}$} We want to introduce some notation.
We define the \emph{degree\/} of an element of $\mathcal
A\otimes\hat\fh^k$ by setting
\[
    \deg a_{\alpha,n}=\deg a^*_{\alpha,n}=
    \deg b_{i,n}=-n.
\]
Similarly, we define a degree of an element of $\U_k\hat\fg$.
Note that the commutation relations preserve the degree, so we
get gradations on $\mathcal A\otimes\hat\fh^k$ and
$\U_k\hat\fg$. A field $x(z)=\sum_n x_nz^{-n}$ is called
\emph{conformal\/} if $\deg x_n+n$ does not depend on $n$. For
a~conformal field $x(z)$ we define $x(z)_{(n)}$ to be a unique
$x_m$ with $\deg x_m=-n$. In particular
$a_\alpha(z)_{(n)}=a_{\alpha,n}$,
$a^*_\alpha(z)_{(n)}=a^*_{\alpha,n}$,
$e_\alpha(z)_{(n)}=e_{\alpha,n}$, etc.

For notational simplicity we denote $\vac\otimes\vac\in
M_N\otimes\hat\pi_N^k$ by $\vac'$. We also denote
$\vac\otimes(\vac\otimes1)\in M_N\otimes\pireg k$ by $\vac'$.
\begin{lemma}
$\vac'\in M_N\otimes\hat\pi_N^{k-k_c}$ is annihilated by
$\fgp$.
\end{lemma}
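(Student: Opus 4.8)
I want to show that $\vac'=\vac\otimes\vac\in M_N\otimes\hat\pi_N^{k-k_c}$ is killed by $\fgp=t^N(\fu_-\oplus\fu)\oplus t^{N+1}\fg[[t]]$. The strategy is to apply the explicit free-field formulae of Proposition~\ref{PrAllRoots} to $\vac'$ and read off what the relevant modes $e_{\alpha,n}$, $f_{\alpha,n}$, $h_{i,n}$ do for $n\ge N$ (for $e_\alpha$, $f_\alpha$) and $n\ge N+1$ (for $h_i$). Since $\fgp$ is generated as a Lie subalgebra by the root vectors $e_{\alpha,n},f_{\alpha,n}$ with $n\ge N$ together with $h_{i,n}$ with $n\ge N+1$ — indeed already by $e_{\alpha_i,N}$, $f_{\alpha_i,N}$ and $h_{i,N+1}$, but it is cleanest to just check all of $\fgp$ directly — it suffices to verify the three families of vanishing statements.

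First I would recall the action of $\mathcal A\otimes\hat\fh^{k-k_c}$ on $M_N\otimes\hat\pi_N^{k-k_c}$: by~(\ref{vacuum}) one has $a_{\alpha,m}\vac=0$ for $m\ge N$ and $a^*_{\alpha,m}\vac=0$ for $m\ge0$ in $M_N$, while $b_{i,m}\vac=0$ for $m>N$ in $\hat\pi_N^{k-k_c}$. Now extract the mode $e_\alpha(z)_{(n)}=e_{\alpha,n}$ from the first line of~(\ref{ffralpha}). The leading term contributes $a_{\alpha,n}$, which annihilates $\vac'$ once $n\ge N$. Each term $:P_\beta^\alpha(a^*(z))a_\beta(z):$ is a normally-ordered product, so in every monomial all the $a^*$'s (which have weight $-\beta$, hence lowering degree appropriately) sit to the right of... — more precisely, after normal ordering, each such mode of total degree $-n$ with $n\ge N$ must, since $P_\beta^\alpha$ has no constant term and hence involves at least one $a^*_{\gamma,m}$ factor together with one $a_{\beta,\ell}$ factor, have the $a_\beta$-mode with index $\ge N$ whenever the $a^*$-indices are $\le 0$; but the $a^*$-modes with index $>0$ sit to the right and kill $\vac$ directly, and the ones with index $\le0$ force the $a_\beta$-index to be $\ge N$ by degree bookkeeping, so it kills $\vac$ too. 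This is the one place where a short but genuine degree/normal-ordering argument is needed, and it is the main (modest) obstacle — it must be organized so that in every monomial of $e_{\alpha,n}$ with $n\ge N$ there is a factor annihilating $\vac$ standing to the \emph{right}. The same argument applies verbatim to the first sum $\sum_\beta :Q_\beta^\alpha(a^*(z))a_\beta(z):$ in the $f_\alpha(z)$ formula.

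For the remaining terms of $f_\alpha(z)_{(n)}$ with $n\ge N$: the term $\sum_\beta \tilde Q_\beta^\alpha(a^*(z))\partial_z a^*_\beta(z)$ and the linear-in-$b$ correction $b_\alpha(z)a^*_\alpha(z)$ and $\sum_i b_i(z)R_i^\alpha(a^*(z))$ — since $R_i^\alpha$ has no constant \emph{or} linear term — all involve, in each monomial, at least one $a^*_{\gamma,m}$ factor; one checks the degree constraint $n\ge N$ forces at least one such factor to have index $>0$ (using that $\tilde Q,R_i^\alpha$ are polynomials with low-order terms vanishing, and $b_{i,m}$ for $m\le N$ leaves the $a^*$-index budget essentially unchanged), and, after moving it to the right past the $b$'s (which commute with all $a^*$'s) and past the other $a^*$'s (which commute among themselves), it annihilates $\vac$. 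For $h_i(z)_{(n)}$ with $n\ge N+1$: from the second line of~(\ref{ffralpha}), the term $b_i(z)_{(n)}=b_{i,n}$ kills $\vac$ since $n\ge N+1>N$, and each $:a^*_\beta(z)a_\beta(z):$-mode of degree $-n$ with $n\ge N+1\ge N$ again kills $\vac'$ by the normal-ordering/degree argument above (with the $a^*$ on the left: if its index is $>0$ it... — no, here we need the $a_\beta$ on the right, which after normal ordering it is, and whose index is forced $\ge N$ when the $a^*$-index is $\le0$). Assembling these three cases shows $x\vac'=0$ for every generator $x$ of $\fgp$, hence for all of $\fgp$, completing the proof.
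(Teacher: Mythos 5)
Your argument is correct and takes essentially the same route as the paper's proof: extract the modes of the free-field formulae~(\ref{ffralpha}) and check, by degree bookkeeping and the normal-ordering convention, that every monomial carries on the right a factor $a_{\gamma,m}$ with $m\ge N$, or $a^*_{\gamma,m}$ with $m\ge0$, or $b_{i,\mu}$ with $\mu>N$, each of which kills $\vac'$. The one imprecise spot is the treatment of the $b$-terms in $f_{\alpha,n}$, where the clean statement is a case split on the $b$-index $\mu$ (if $\mu>N$ then $b_{i,\mu}$ itself annihilates $\vac'$; if $\mu\le N$ then the $a^*$-indices sum to $n-\mu\ge0$, so some $a^*$-index is $\ge0$ --- not necessarily $>0$ as you wrote --- and that factor annihilates $\vac'$), but this does not affect the conclusion.
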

\begin{proof}
Let $A$ be the product of any number of generators
$a^*_{\alpha,n}$, with at least one $n>0$, then for any
$\gamma$ and $m$ we have ${:}Aa_{\gamma,m}{:}\vac'=0$ by
definition of normal ordering.

We show first that $e_{\alpha,n}\vac'=0$ if $n\ge N$. Indeed,
by Proposition~\ref{PrAllRoots} we have
\[
    e_{\alpha,n}\vac'=a_{\alpha,n}\vac'+
    \sum_{\beta\in\Delta_+}\sum_{\mu+\nu=n}
    {:}P_\beta^\alpha(a^*(z))_{(\mu)}a_{\beta,\nu}{:}\vac'.
\]
The first term is clearly zero; the terms with $\nu\ge N$ are
also zero. If $\nu<N$, then $\mu>0$, and the term is zero by
the remark in the beginning of the proof.

We leave it to the reader to check that $h_{i,n}\vac'=0$ for
$n>N$.

Finally, we have
\begin{multline*}
    f_{\alpha,n}\vac'=
\sum_{\beta\in\Delta_+}{:}(Q_\beta^\alpha(a^*(z))a_\beta(z))_{(n)}{:}\vac'+
\sum_{\beta\in\Delta_+} (\tilde
Q_\beta^\alpha(a^*(z))\partial_z a^*_\beta(z))_{(n)}\vac'\\+
\sum_{\mu+\nu=n} b_{\alpha,\mu}a^*_{\alpha,\nu}\vac'+
\sum_i\sum_{\mu+\nu=n}b_{i,\mu}R_i^\alpha(a^*(z))_{(\nu)}\vac'.
\end{multline*}
For the first two terms we note that every monomial has degree
$-n$. Thus, it has either a multiple $a_{\gamma,m}$ with $m\ge
N$ and the term is zero, or $a^*_{\gamma,m}$ with $m>0$, and
the term is again zero. The last two terms are clearly zero if
$\nu\ge0$. Otherwise, $\mu>N$, and we use the fact that $b$'s
and $a^*$'s commute.
\end{proof}
Thus there is a unique homomorphism $\MM\to
M_N\otimes\hat\pi_N^{k-k_c}$ sending $\vac$ to~$\vac'$.

\begin{lemma}\label{HinBin}
For all $i$ we have
\[
    h_{i,N}\vac'=b_{i,N}\vac'.
\]
\end{lemma}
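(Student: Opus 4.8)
The plan is to compute both sides explicitly using the free field formula for $h_i(z)$ from Proposition~\ref{PrAllRoots} and extract the coefficient of degree $-N$. Recall that under the homomorphism $\MM\to M_N\otimes\hat\pi_N^{k-k_c}$ we have $h_{i,N}=h_i(z)_{(N)}$, and by the middle formula of~(\ref{ffr}) (or~(\ref{ffralpha}))
\[
    h_{i,N}\vac'=-\sum_{\beta\in\Delta_+}\beta(h_i)\,\bigl({:}a_\beta^*(z)a_\beta(z){:}\bigr)_{(N)}\vac'+b_{i,N}\vac'.
\]
So the whole content is to show that the normally ordered term $\bigl({:}a_\beta^*(z)a_\beta(z){:}\bigr)_{(N)}$ annihilates $\vac'$ for every $\beta\in\Delta_+$. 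First I would write out this coefficient as $\sum_{\mu+\nu=N}{:}a^*_{\beta,\mu}a_{\beta,\nu}{:}$, where the sum runs over all integers $\mu$ with $\nu=N-\mu$. The key observation is a dichotomy on the sign of $\mu$: if $\mu\le 0$ then $\nu=N-\mu\ge N$, so by the defining relations~(\ref{vacuum}) of $M_N$ we have $a_{\beta,\nu}\vac=0$ and (since normal ordering puts $a_{\beta,\nu}$ with $\nu\ge 0$ on the right, and $\mu\le 0$ means $a^*_{\beta,\mu}$ also sits to the right when $\mu>0$ — but here $\mu\le 0$, so it's on the left) the monomial is ${:}a^*_{\beta,\mu}a_{\beta,\nu}{:}=a^*_{\beta,\mu}a_{\beta,\nu}$, which kills $\vac$. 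If instead $\mu>0$, then $a^*_{\beta,\mu}$ gets moved to the right under normal ordering, and $a^*_{\beta,\mu}\vac=0$ for $\mu>0\ge 0$ by~(\ref{vacuum}); hence ${:}a^*_{\beta,\mu}a_{\beta,\nu}{:}\vac=0$ in this case too. This is precisely the argument already used in the preceding lemma ("the $e_{\alpha,n}\vac'=0$" computation), so I would simply invoke that same normal-ordering remark.

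Combining the two cases, every term in $\bigl({:}a^*_\beta(z)a_\beta(z){:}\bigr)_{(N)}\vac'$ vanishes, so $h_{i,N}\vac'=b_{i,N}\vac'$. Note that $b_{i,N}\vac'$ is the nontrivial part: since $\hat\pi_N^{k-k_c}$ is generated by $\vac$ with relations $b_{i,n}\vac=0$ only for $n>N$, the element $b_{i,N}\vac$ is genuinely nonzero, which is consistent with the fact (established just before) that $h_{i,N}$ generates a free polynomial action of $\Sym(t^N\fh)$.

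I do not anticipate any real obstacle here: the lemma is a direct unwinding of the definitions, and the only thing to be careful about is the bookkeeping of which generators move to the right under normal ordering versus which are already there, together with matching the two threshold conditions ($n\ge N$ for the $a$'s, $n\ge 0$ for the $a^*$'s) against $\mu+\nu=N$. The mild subtlety worth stating explicitly is that when $\mu=0$ one has $\nu=N$, so it is the $a_{\beta,N}\vac=0$ relation (the boundary case of~(\ref{vacuum})) that does the work; and when $\mu=N$ one has $\nu=0$, handled by $a^*_{\beta,N}\vac=0$. Every intermediate value of $\mu$ falls into one of the two cases above.
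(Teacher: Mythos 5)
Your proposal is correct and follows essentially the same route as the paper: expand $h_{i,N}\vac'$ via the middle formula of~(\ref{ffr}) and kill each term $\bigl({:}a^*_{\beta,\mu}a_{\beta,\nu}{:}\bigr)\vac'$ with $\mu+\nu=N$ by the dichotomy $\mu\ge0$ versus $\nu\ge N$. The paper streamlines the normal-ordering bookkeeping by observing that $a^*_{\beta,\mu}$ and $a_{\beta,\nu}$ commute since $\mu+\nu=N\ne0$, so the ordering symbol can simply be dropped, but this is a cosmetic difference from your case analysis.
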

\begin{proof}
By~(\ref{ffr})
\[
    h_{i,N}\vac'=-\sum_{\mu+\nu=N}\sum_{\beta\in\Delta_+}
\beta(h_i)a_{\beta,\mu}^*a_{\beta,\nu}\vac'+b_{i,N}\vac'
\]
(we can remove normal ordering because $a_{\beta,\mu}^*$ and
$a_{\beta,\nu}$ commute, since $\mu+\nu=N\ne0$). All terms
$a_{\beta,\mu}^*a_{\beta,\nu}\vac'$ are zero because $\mu\ge0$
or $\nu>N$.
\end{proof}
Thus our homomorphism intertwines the action of $t^N\fh$ on the
modules, so it gives rise to a homomorphism $\wp:\MMR\to
M_N\otimes\pireg{k-k_c}$.
\subsection{PBW bases and filtrations}
Choosing some order of positive roots, we get a basis of
\emph{$\Fun(\fh^{*,r})$-module\/} $\MMR$ given by
lexicographically ordered monomials
\begin{equation}\label{pbwbasis}
    J:=\prod_{\substack{n<N\\i}} (h_{i,n})^{\imath_{i,n}}
    \prod_{\substack{n<N\\ \alpha}} (f_{\alpha,n})^{\jmath_{\alpha,n}}
    \prod_{\substack{n<N\\ \beta}} (e_{\beta,n})^{\ell_{\beta,n}}\vac\otimes1,
\end{equation}
where all exponents $\imath_{i,n}$, $\jmath_{\alpha,n}$, and
$\ell_{\beta,n}$ are nonnegative integers. Also, we require
that all $h_{i,n}$ with $n>0$ are to the left from all
$h_{i,n}$ with $n<0$. Denote the set of such monomials by $\J$.
Every $x\in\MMR$ can be uniquely written as
\[
    \sum_{J\in\J}J\cdot\phi_J,
\]
where $\phi_J$ are functions on $\fh^{*,r}$.

Further, a basis of $\Fun(\fh^{*,r})$-module $M_N\otimes\pireg
k$ is given by monomials
\[
    A:=
    \prod_{\substack{n<N\\i}} (b_{i,n})^{\imath_{i,n}}
    \prod_{\substack{n<0\\ \alpha}} (a^*_{\alpha,n})^{\jmath_{\alpha,n}}
    \prod_{\substack{n<N\\ \beta}} (a_{\beta,n})^{\ell_{\beta,n}}\vac',
\]
where all exponents $\imath_{i,n}$, $\jmath_{\alpha,n}$, and
$\ell_{\beta,n}$ are nonnegative integers. We order the
multiples by a convention that all $b_{i,n}$ with $n>0$ are to
the left from all $b_{i,n}$ with $n<0$. We denote the set of
such monomials by $\B$.

Set
\[
I(h_{i,n})=I(f_{\alpha,n})=I(e_{\alpha,n})=N-n.
\]
Define \emph{the weight\/} $I(J)$ of a monomial $J\in\J$ by
extending the above assignment to monomials. Setting
$I(J\cdot\phi)=I(J)$ for $\phi\in\Fun(\fh^{*,r})$, we get a
filtration on $\MMR$. Note that $(\MMR)^{<0}=0$.

Similarly, by assigning
\[
    I(b_{i,n})=I(a_{\alpha,n})=N-n,\qquad
    I(a^*_{\alpha,n})=-n
\]
we get a filtration on $M_N\otimes\pireg k$, again
$(M_N\otimes\pireg k)^{<0}=0$.

To complete the proof of Theorem~\ref{ThFreeField}, we
shall~(i) note that the map $\wp$ preserves the filtrations;
(ii) identify $\gr\MMR$ and $\gr(M_N\otimes\pireg{k-k_c})$ with
rings of functions on certain schemes and (iii) identify the
induced morphism of the schemes.

\subsection{$\gr\U_k\hat\fg$ and $\gr\MMR$ as functions on
schemes} Recall that for a vector space $V$ we have the loop
space $V\otimes\C((t))$ and its restricted dual $V^*((t))\,dt$.
These spaces are ind-schemes. Further, $I$ gives a filtration
on $\U_k\hat\fg$ and we have
\[
    \gr\U_k\hat\fg=\Sym\fg((t))=\Fun(\fg^*((t))\,dt).
\]
(More precisely, $\fg^*((t))\,dt$ is an ind-scheme, so we
should view $\Sym\fg((t))$ as an algebra filtered by ideals.)

The filtration on $\U_k\hat\fg$ gives rise to a filtration on
its quotient module $\MM$. We see that $\gr\MM$ is a quotient
algebra of $\Sym\fg((t))$ equal to
\[
    \C[\bar h_{i,n},\bar e_{\alpha,m},\bar f_{\alpha,m}|\,n\le N,
    m<N],
\]
where we denote the image of $h_{i,n}$ in $\gr\MM$ by $\bar
h_{i,n}$ etc. Next,
\[
    \gr\MMR=\C[\bar h_{i,n},\bar h_{i,N}^{-1},\bar e_{\alpha,m},
    \bar f_{\alpha,m}|\,n\le N,m<N].
\]
It follows that $\gr\MMR$ can be identified with the ring of
functions on the scheme of infinite type
\[
    \tilde\fg^*:=\Biggl\{\,\sum_{n\ge-N-1}A_nt^n\,dt,
    \forall n\,A_n\in\fg^*,A_{-N-1}\in\fh^{*,r}\Biggr\}.
\]
The action of $\gr\U_k\hat\fg$ on $\gr\MMR$ is given by the
inclusion $\tilde\fg^*\hookrightarrow\fg^*((t))\,dt$.

%\begin{proposition}
%\noindstep The action of $\hat\fg$ on $M_N\otimes\pireg k$ is
%compatible with filtrations: $\fg^{\le i}(M_N\otimes\pireg
%k)^{\le j}\subset(M_N\otimes\pireg k)^{\le i+j}$.\\
%\noindstep The induced action of $\gr\hat\fg$ on
%$M_N\otimes\pireg k$ is given by
%\[
%\begin{split}
%\bar e_{i,n}\mapsto&\,\bar a_{\alpha_{i,n}}+\sum_{\beta\in\Delta_+}
%(P_\beta^i(\bar a_\alpha^*(z))\bar a_\beta(z))_{(n)},\\
%\bar h_{i,n}\mapsto&-\sum_{\beta\in\Delta_+}
%\beta(h_i)(\bar a_\beta^*(z)\bar a_\beta(z))_{n}+\bar b_{i,n},\\
%\bar f_{i,n}\mapsto& \sum_{\beta\in\Delta_+}
%(Q_\beta^i(\bar a_\alpha^*(z))\bar a_\beta(z))_{(n)}.
%\end{split}
%\]
%\end{proposition}
%Note the absence of normal ordering due to the fact that the
%algebras involved are commutative.

\subsection{$\gr(\mathcal A\otimes\hat\fh_k)$ and $\gr(M_N\otimes\pireg k)$ as
functions on schemes} Set
\[
\begin{split}
   \tilde U:=&\{u\in U[[t]]:\;u(0)=1\}.\\
   \tilde\fb^*:=&\Biggl\{\,\sum_{n\ge-N-1}A_nt^n\,dt,
   \forall n\, A_n\in\fb^*, A_{-N-1}\in\fh^{*,r}\Biggr\}.\\
   \tilde\fu^*:=&\Biggl\{\,\sum_{n\ge-N}A_nt^n\,dt,
   \forall n\, A_n\in\fu^*\Biggr\}.\\
   \tilde\fh^*:=&\Biggl\{\,\sum_{n\ge-N-1}A_nt^n\,dt,
   \forall n\, A_n\in\fh^*, A_{-N-1}\in\fh^{*,r}\Biggr\}.
\end{split}
\]
Define
\[
    W:=\tilde U\times\tilde\fb^*=\tilde
    U\times\tilde\fu^*\times\tilde\fh^*.
\]
Note that all the spaces introduced are schemes (of infinite
type). Recall that $U((t))$ stands for the ind-scheme of loops
with values in $U$.
\begin{lemma}
There is a natural identification
\[
\begin{split}
\gr(\mathcal A\otimes\hat\fh_k)=&\Fun\bigl(U((t))
\times(\fb^*((t))\,dt)\bigr),\\
\gr(M_N\otimes\pireg k)=&\Fun(W),
\end{split}
\]
so that the action of $\gr(\mathcal A\otimes\hat\fh_k)$ on
$\gr(M_N\otimes\pireg k)$ is given by the natural inclusion
$W\hookrightarrow U((t))\times(\fb^*((t))\,dt)$.
\end{lemma}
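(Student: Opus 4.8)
The plan is to establish this by the same bookkeeping used above for the identifications $\gr\U_k\hat\fg=\Sym\fg((t))=\Fun(\fg^*((t))\,dt)$ and $\gr\MMR=\Fun(\tilde\fg^*)$, the only genuinely new point being the splitting $\fb^*=\fh^*\oplus\fu^*$. First I record that the $I$-filtration makes $\gr(\mathcal A\otimes\hat\fh_k)$ \emph{commutative}: the only non-vanishing commutators are $[a_{\alpha,n},a^*_{\alpha,-n}]=1$ and $[b_{i,n},b_{j,-n}]=-nk(h_i,h_j)\mathbf{1}$, whose right-hand sides have $I$-degree $0$, whereas the products $a_{\alpha,n}a^*_{\alpha,-n}$ and $b_{i,n}b_{j,-n}$ have $I$-degree $N$ and $2N$ respectively, which are \emph{strictly} positive because $N\ge 1$. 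Hence the Poincar\'e--Birkhoff--Witt theorem identifies $\gr(\mathcal A\otimes\hat\fh_k)$ with the polynomial algebra on the symbols $\bar a_{\alpha,n}$, $\bar a^*_{\alpha,n}$, $\bar b_{i,n}$ ($n\in\Z$), viewed --- exactly as for $\Sym\fg((t))$ --- as an algebra filtered by ideals, i.e.\ as functions on an ind-scheme. I then fix the dictionary: for a loop $u$ in $U$ the coefficients of the expansion $y_\alpha(u(t))=\sum_m y_{\alpha,m}t^m$ are coordinates on $U((t))$ and $\bar a^*_{\alpha,n}=y_{\alpha,-n}$; the $\bar a_{\alpha,n}$ are the coefficients of $t^{-n-1}\,dt$ of a $\fu^*$-valued $1$-form (read off against the basis $e_\alpha$ of $\fu$), hence coordinates on $\fu^*((t))\,dt$; and the $\bar b_{i,n}$ are the coefficients of $t^{-n-1}\,dt$ of an $\fh^*$-valued $1$-form, hence coordinates on $\fh^*((t))\,dt$. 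Since $\fb^*=\fh^*\oplus\fu^*$, this gives $\gr(\mathcal A\otimes\hat\fh_k)=\Fun\bigl(U((t))\times(\fb^*((t))\,dt)\bigr)$.

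For the module side I would use the $\Fun(\fh^{*,r})$-basis $\B$ of $M_N\otimes\pireg k$. Every generator occurring in a monomial of $\B$ has strictly positive $I$-degree ($N-n>0$ for $b_{i,n}$ and $a_{\beta,n}$ with $n<N$, and $-n>0$ for $a^*_{\alpha,n}$ with $n<0$), so $(M_N\otimes\pireg k)^{<0}=0$ and $\gr(M_N\otimes\pireg k)$ is free over $\Fun(\fh^{*,r})$ on the symbols of the monomials in $\B$; moreover tensoring with $\Fun(\fh^{*,r})$ over $\Sym(t^N\fh)$ commutes with $\gr$, because $\Sym(t^N\fh)$ sits in $I$-degree $0$ and localization is flat. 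Matching ranges, the symbols of $\B$ are precisely the coordinate functions on $\tilde U$ (the condition ``power series with constant term $1$'' is the vanishing of the missing $\bar a^*_{\alpha,n}$, $n\ge 0$), on $\tilde\fu^*$ (the condition ``$1$-form starting at $t^{-N}\,dt$'' is the vanishing of the missing $\bar a_{\beta,n}$, $n\ge N$), and on $\tilde\fh^*$ (the missing $\bar b_{i,n}$, $n>N$, vanish, while localizing the leading coefficient $\bar b_{i,N}$ over $\Fun(\fh^{*,r})$ is precisely the passage to $A_{-N-1}\in\fh^{*,r}$). Hence $\gr(M_N\otimes\pireg k)=\Fun(\tilde U\times\tilde\fu^*\times\tilde\fh^*)=\Fun(W)$.

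It then remains to check that the action of $\gr(\mathcal A\otimes\hat\fh_k)$ on $\gr(M_N\otimes\pireg k)$ is restriction of functions along the natural inclusion $W\hookrightarrow U((t))\times(\fb^*((t))\,dt)$. Since $\gr(\mathcal A\otimes\hat\fh_k)$ is commutative, each generator acts on $\gr(M_N\otimes\pireg k)$ by multiplication by its image; for a generator in the ``free'' range this image is the corresponding coordinate function, and for the remaining generators ($a_{\alpha,n}$ with $n\ge N$, $a^*_{\alpha,n}$ with $n\ge 0$, $b_{i,n}$ with $n>N$) a one-line $I$-degree computation --- the generator commutes, modulo terms of strictly smaller $I$-degree, all the way to $\vac'$, which it annihilates --- shows that the image is $0$, which is exactly the restriction to $W$ of the corresponding coordinate function by the vanishing statements above. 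This gives the asserted compatibility. I do not expect a serious obstacle here: the content is entirely bookkeeping, and the only place that requires care is the index dictionary (the $z^{-n}$ versus $z^{-n-1}$ conventions, and the assignment of the $a^*$'s to the group factor $U((t))$, the $a$'s to the cotangent factor $\fu^*((t))\,dt$, and the $b$'s to $\fh^*((t))\,dt$) together with the consistent treatment of the infinite-type schemes and filtered-by-ideals algebras, mirroring the handling of $\gr\MMR$ above.
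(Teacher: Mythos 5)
Your proposal is correct and follows essentially the same route as the paper: fix the index dictionary identifying $\bar a^*_{\alpha,n}$ with jet coordinates on $U((t))$ and $\bar a_{\alpha,n}$, $\bar b_{i,n}$ with coefficients of $\fu^*$- and $\fh^*$-valued one-forms, then match the ranges of surviving generators against the defining conditions of $\tilde U$, $\tilde\fu^*$, $\tilde\fh^*$. The only (immaterial for this lemma) divergence is that the paper takes the fiber coordinates on $\fu^*((t))\,dt$ to be the jets of $w_\alpha=\partial/\partial y_\alpha$ rather than linear coordinates dual to the basis $e_\alpha$, a choice that becomes relevant only in the subsequent proposition identifying the action with $(u,A)\mapsto\Ad_uA$.
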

\begin{proof}
Let us identify $T^*U=U\times\fu^*$ using the left
trivialization. Let $w_\alpha$ be the vector field
$\partial/\partial y_\alpha$, viewed as a function on $T^*U$.
Then $(y_\alpha,w_\alpha)$ is a system of coordinates on
$U\times\fu^*$. Let $y_{\alpha,n}$ and $w_{\alpha,n}$ be the
corresponding jet coordinates on
$U((t))\times(\fu^*((t))\,dt)$. That is, viewing
$(u(t),A(t)\,dt)\in U((t))\times(\fu^*((t))\,dt)$ as
a~$\C((t))$-valued point of $T^*U$, we have:
\[
    \sum y_{\alpha,n}t^n=y_\alpha(u(t)),\qquad
    \sum w_{\alpha,n}t^n=w_\alpha(u(t),A(t)).
\]
Identifying $\bar a_{\alpha,n}\mapsto w_{\alpha,-n-1}$, $\bar
a^*_{\alpha,n}\mapsto y_{\alpha,-n}$ we obtain
\[
    \gr\mathcal
    A=\Fun(U((t))\times(\fu^*((t))\,dt)).
\]
Further, $\gr\hat\fh_k=\C[\bar b_{i,n}]=\Fun(\fh^*((t))\,dt)$
and the first statement of the lemma follows.

Clearly $\{y_{\alpha,n},w_{\alpha,n}|\,n\ge0\}$ is a coordinate
system on the jet scheme $U[[t]]\times(\fu^*[[t]]\,dt)$. Note
that $w_\alpha$ is linear on the fibers of $T^*U\to U$, so we
have
\[
    w_\alpha(u(t),t^{-N}A(t))=t^{-N}w_\alpha(u(t),A(t)),
\]
therefore
\[
\{y_{\alpha,n},\,n\ge0,\quad w_{\alpha,m},\,m\ge-N\}
\]
is a coordinate system on $U[[t]]\times\tilde\fu^*$. Note that
$u(t)\in U[[t]]$ satisfies $u(0)=1$ iff $y_{\alpha,0}=0$ for
all $\alpha$. Hence $\gr M_N=\Fun(\tilde U\times\tilde\fu^*)$.
Now the second statement of the lemma follows. The rest of the
lemma is now obvious.
\end{proof}

\subsection{End of proof of Theorem~\ref{ThFreeField}}
In part~(b) of Theorem~\ref{ThFreeField} we have constructed an
action of $\hat\fg$ on $M_N\otimes\pireg{k-k_c}$. It gives rise
to an action of $\gr\U_k\hat\fg$ on
$\gr(M_N\otimes\pireg{k-k_c})$, which actually comes from a
homomorphism of algebras.

\begin{proposition}\label{ClassicalFree}
The action of $\gr\U_k\hat\fg$ on
$\gr(M_N\otimes\pireg{k-k_c})$ is induced by
$\Phi:W\to\fg^*((t))\,dt$ given by
\[
    \Phi(u,A)=\Ad_u A,\quad
    u\in\tilde U,A\in\tilde\fb^*.
\]
\end{proposition}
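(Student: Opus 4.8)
The plan is to compare the two homomorphisms of commutative algebras $\Sym\fg((t))=\gr\U_k\hat\fg\to\Fun(W)=\gr(M_N\otimes\pireg{k-k_c})$, namely the action map (which, as noted just before the statement, is an algebra homomorphism) and $\Phi^*$; since $\Sym\fg((t))$ is generated by $\fg((t))$ it is enough to show that these two maps agree on $\fg((t))$, i.e.\ to compute, for each $x\in\fg$, the image of the generating series $\sum_m(x\otimes t^m)z^{-m-1}$ under each and match.

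First I would pass to the associated graded in the free field formulae. With respect to the $I$-filtration the ``quantum'' terms of~(\ref{ffr})--(\ref{ffralpha}) disappear: a monomial occurring in $\tilde Q_\beta^\alpha(a^*(z))\partial_z a^*_\beta(z)$, or in the term $(c_i+(k-k_c)(e_i,f_i))\partial_z a^*_{\alpha_i}(z)$, contributes to the mode $f_{\alpha,n}$ only through a product of $a^*$'s, which has $I$-weight $-n<N-n=I(f_{\alpha,n})$ and hence vanishes in $\gr$; while a straightforward weight count shows that every remaining monomial of~(\ref{ffralpha}) — including every mode of $b_i(z)$ and of $b_\alpha(z)a^*_\alpha(z)$ — has exactly the right $I$-weight and therefore survives. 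In particular the central extensions of $\hat\fg$ and $\hat\fh$, the level, and the constants $c_i$ are irrelevant to $\gr$, so the action map is the algebra homomorphism obtained from~(\ref{ffralpha}) by deleting all $\partial_z a^*$ terms and dropping normal orderings, rewritten in the coordinates of the preceding two subsections ($\bar a_{\gamma,n}\leftrightarrow w_{\gamma,-n-1}$, $\bar a^*_{\gamma,n}\leftrightarrow y_{\gamma,-n}$, and $\bar b_{j,n}$ the linear coordinates on the $\fh^*$-factor).

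Next I would prove the finite-dimensional identity that underlies the proposition: the symbols of~(\ref{findim}) together with the surviving Cartan part, viewed as a map $\fg\to\Fun(U\times\fu^*\times\fh^*)=\Fun(T^*U\times\fh^*)$, constitute the comoment map of $U\times\fb^*\to\fg^*$, $(u,A)\mapsto\Ad_u A$, where $\fb^*=\fu^*\oplus\fh^*$ is embedded in $\fg^*$ as the annihilator of $\fu_-$. The $\fu^*$-component is the classical fact that the moment map for $G$ acting on the big cell $U\subset G/B_-$ is $(u,\xi)\mapsto\Ad_u\xi$ under $T^*U=U\times\fb_-^\perp\hookrightarrow T^*(G/B_-)=G\times_{B_-}\fb_-^\perp$ and $\fb_-^\perp\cong\fu^*$ — equivalently, $\Phi$ is the quasi-classical gauge-transformation formula. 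The $\fh^*$-component is then forced by $\fh$-equivariance; expanding $\Ad_{u^{-1}}h_i\in h_i+\fu$ and $\Ad_{u^{-1}}f_\alpha$ in root vectors and pairing against $\xi\in\fu^*$ and $\eta\in\fh^*$ recovers precisely the formulae of the previous paragraph (and incidentally characterizes $P_\beta^\alpha$, $Q_\beta^\alpha$, $R_i^\alpha$). Finally, Frenkel's construction of~(\ref{ffr}) builds it (\cite[\S\S5--6]{FrenkelBook}) from the action of $\fg((t))$ on $U((t))$ — that is, from the finite-dimensional realization evaluated at $\C((t))$-points — together with the $b$-fields and the corrections already discarded; so the identity just established, applied over the ring $\C((z))$, says exactly that the series attached to $x$ equals $\langle\Ad_u A,x\rangle$ as a function on $W(\C((z)))$, i.e.\ that the action map equals $\Phi^*$. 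Restricting along $W\hookrightarrow U((t))\times(\fb^*((t))\,dt)$ gives the proposition.

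I expect the substance of the argument to be bookkeeping rather than ideas: confirming that in $\gr$ precisely the $\partial_z a^*$, central, and $c_i$ contributions drop out while everything else is $I$-homogeneous of the correct weight, and pinning down the sign conventions — the meaning of ``$\Ad_u$'' on $\fg^*$, the normalization of the homogeneous coordinates $y_\alpha$ fixed by~(\ref{homo}), and the embedding $\fb^*\hookrightarrow\fg^*$ — so that the formulae for $\bar h_i(z)$ and $\bar f_\alpha(z)$ coincide on the nose. The geometric input, that the free field realization is the moment map (equivalently, the gauge action) of the big cell of $G/B_-$, is standard.
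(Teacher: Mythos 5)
Your proposal is correct and rests on the same two pillars as the paper's proof --- passing to the associated graded kills exactly the quantum corrections, and what survives is the classical gauge/moment-map formula $(u,A)\mapsto\Ad_uA$ --- but the execution differs in where the classical statement comes from. The paper works one level up, with the vertex algebras $V_k(\fg)$ and $M_\fg\otimes\pi_0^{k-k_c}$: it first observes that the $I$-filtration and the usual PBW filtration have the same associated graded (using that $I(J)=N\cdot PBW(J)-j$ on the degree-$j$ graded piece), then cites \cite[Proposition~7.1.1 and (7.1.5)]{FrenkelBook} for the identification of the induced map of associated graded vertex algebras with $(u,A)\mapsto\Ad_uA$, and finally passes to enveloping algebras and restricts along $W\hookrightarrow U((t))\times(\fb^*((t))\,dt)$. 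You instead verify the $I$-weight of every monomial in~(\ref{ffralpha}) directly --- which is what lets you bypass the filtration comparison --- and re-derive the content of Frenkel's (7.1.5) from the finite-dimensional fact that the symbol of~(\ref{findim}) is the comoment map of the big cell of $G/B_-$. Your route is more self-contained and elementary; the paper's is shorter because it delegates the classical free-field computation to \cite{FrenkelBook}. One small point worth making explicit in your write-up: dropping normal orderings in $\gr$ is legitimate because reordering an $a\,a^*$ pair replaces a factor of $I$-weight $N$ by a constant, so the weight drops by $N\ge1$.
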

\begin{proof}
\emph{Step 1.} From the proof of part {\rm(}a\/{\rm)} of
Theorem~\ref{ThFreeField}, recall the vertex algebras
$V_k(\fg)$ and $M_\fg\otimes\pi_0^{k-k_c}$. The weight $I$
defined above gives a filtration on these algebras as well, and
we claim that $\gr
V_k(\fg)=\Fun(\fg^*[[t]]\,dt)=\Fun(\fg[[t]])$, where we
identify $\fg$ and $\fg^*$ via a non-degenerate pairing. This
statement would follow immediately from the proof of
Proposition~7.1.1 in~\cite{FrenkelBook} if we take the usual
Poincare--Birkhoff--Witt filtration. However, $V_k(\fg)$ has a
grading by degree, and on $j$-th graded piece we have
$I(J)=N\cdot PBW(J)-j$, where we denote by $PBW(J)$ the
$PBW$-weight of a monomial. It follows that the associated
graded space is the same for both filtrations.

Similarly,
\[
    \gr(M_\fg\otimes\pi_0^{k-k_c})=\Fun(U[[t]]\times(\fb^*[[t]]\,dt))
    =\Fun(U[[t]]\times\fb^-[[t]]).
\]
Finally, the map on associated graded vertex algebras, induced
from the free field realization map $V_k(\fg)\to
M_\fg\otimes\pi_0^{k-k_c}$ is induced from the map
$U[[t]]\times(\fb^*[[t]]\,dt)\to\fg^*[[t]]$ given by
$(u,A)\mapsto\Ad_uA$. (cf.~\cite[(7.1.5)]{FrenkelBook}.)

\emph{Step 2.} The map of vertex algebras induces a map of
their enveloping algebras
\[
    \Sym\fg((t))\to\gr(\mathcal A\otimes\hat\fh_{k-k_c}).
\]
It is easy to see that it is induced by the map $U((t))\times
(\fb^*((t))\,dt)\to\fg^*((t))\,dt$ given by the same formula
$(u,A)\mapsto\Ad_uA$.
% In fact one can consider a map of loop spaces given by ubu^{-1}
% It clearly descends to the map described in Step 1. Now we
% should use the fact that a map of vertex algebras defines
% a map of enveloping algebras uniquely.

Since the action of $\gr(\mathcal A\otimes\hat\fh_{k-k_c})$ on
$\gr(M_N\otimes\pireg{k-k_c})$ is given by the natural
inclusion $W\hookrightarrow U((t))\times(\fb^*((t))\,dt)$, the
proposition follows.
\end{proof}

\begin{corollary}
\noindstep The map
$\gr\wp:\gr\MMR\to\gr(M_{N,k}\otimes\pireg{k-k_c})$ is induced
by $\Phi':W\to\tilde\fg^*$ given by
\[
    \Phi'(u,A)=\Ad_uA.
\]
\noindstep $\Phi'$ is an isomorphism.
\end{corollary}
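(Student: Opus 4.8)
The plan is to treat the two parts in turn: part~(a) is essentially bookkeeping on top of Proposition~\ref{ClassicalFree}, while part~(b) is an explicit order-by-order normal-form computation whose only real input is that the leading coefficient is regular.

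For part~(a) I would first record that the free field realization of Proposition~\ref{PrAllRoots} is compatible with the weight $I$: reading off the mode expansions in~(\ref{ffralpha}), the image of each of the generators $e_{\alpha,n},f_{\alpha,n},h_{i,n}$ of $\U_k\hat\fg$ is a sum of normally ordered monomials, each of $I$-weight $\le N-n$ (with equality for the monomials that survive in $\gr$). Hence $\wp$ is filtered, and $\gr\wp\colon\gr\MMR\to\gr(M_N\otimes\pireg{k-k_c})$ is a homomorphism of $\gr\U_k\hat\fg$-modules, because $\wp$ is $\hat\fg$-equivariant. Now $\gr\MM=\Fun(\tilde\fg^*)$ is generated over $\gr\U_k\hat\fg=\Fun(\fg^*((t))\,dt)$ by $\overline{\vac\otimes1}=1$, with module structure given by restriction along $\iota\colon\tilde\fg^*\hookrightarrow\fg^*((t))\,dt$; and by the lemma identifying $\gr(M_N\otimes\pireg{k-k_c})$ with $\Fun(W)$ together with Proposition~\ref{ClassicalFree}, this algebra is generated by $\overline{\vac'}=1$ with module structure given by pullback along $\Phi=\iota\circ\Phi'\colon W\to\fg^*((t))\,dt$. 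Since $\gr\wp$ is equivariant and sends $\overline{\vac\otimes1}$ to $\overline{\vac'}$, for every $a\in\gr\U_k\hat\fg$ we get
\[
  \gr\wp(\iota^*(a))=\gr\wp(a\cdot\overline{\vac\otimes1})=a\cdot\overline{\vac'}=\Phi^*(a)=(\Phi')^*(\iota^*(a)).
\]
Thus $\gr\wp$ and $(\Phi')^*$ agree on the subring $\iota^*(\gr\U_k\hat\fg)\subset\gr\MMR$, which together with the elements $\bar h_{i,N}^{-1}$ generates $\gr\MMR$ as a ring; and since both are ring homomorphisms agreeing on $\bar h_{i,N}=\iota^*(h_{i,N})$, they agree on $\bar h_{i,N}^{-1}$ as well. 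Hence $\gr\wp=(\Phi')^*$, proving~(a).

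For part~(b) I would construct the inverse of $\Phi'$ by conjugating a general point of $\tilde\fg^*$ into $\tilde\fb^*$ one level at a time. Using the decomposition $\fg=\fb_-\oplus\fu$, a point $B=\sum_{n\ge-N-1}B_nt^n\,dt$ of $\tilde\fg^*$ lies in $\tilde\fb^*$ precisely when the $\fu$-component of every $B_n$ vanishes (automatic at $n=-N-1$, since $B_{-N-1}\in\fh^{*,r}$). Write $u=\exp(X)$ with $X=\sum_{i\ge1}X_it^i$, $X_i\in\fu$, and expand
\[
  \Ad_{u^{-1}}B=B-[X,B]+\tfrac12[X,[X,B]]-\cdots.
\]
In the $\fu$-component of the coefficient of $t^n\,dt$ (for $n\ge-N$) the highest-index unknown that appears is $X_{n+N+1}$, and it occurs only through the linear term $-[X_{n+N+1},B_{-N-1}]=\ad_{B_{-N-1}}(X_{n+N+1})$. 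Since $B_{-N-1}\in\fh^{*,r}$ is regular, $\ad_{B_{-N-1}}$ is invertible on $\fu$, so the requirement that this $\fu$-component vanish determines $X_{n+N+1}$ uniquely, and polynomially (indeed affinely), in terms of $B$ and the previously determined $X_1,\dots,X_{n+N}$. Running this recursion yields the unique $u\in\tilde U$ with $A:=\Ad_{u^{-1}}B\in\tilde\fb^*$, hence the unique $(u,A)\in W$ with $\Phi'(u,A)=B$; and as each step is a morphism, $B\mapsto(u,A)$ is a morphism $\tilde\fg^*\to W$ inverse to $\Phi'$. Therefore $\Phi'$ is an isomorphism.

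The one point needing care is, in part~(b), upgrading ``bijective on points'' to ``isomorphism of pro-schemes''; but $W$ and $\tilde\fg^*$ are limits of finite-type schemes along the level grading, and the recursion above expresses each level of the inverse through finitely many lower levels, so this is automatic. In part~(a) the only subtlety is that $\gr\MMR$ is a localization rather than a quotient of $\gr\U_k\hat\fg$, which is handled by the remark about $\bar h_{i,N}^{-1}$. Finally, combining the two parts: $\gr\wp=(\Phi')^*$ is an isomorphism, and since the filtrations are bounded below ($(\cdot)^{<0}=0$) this forces $\wp$ itself to be an isomorphism of $\hat\fg$-modules, which is part~(c) of Theorem~\ref{ThFreeField}.
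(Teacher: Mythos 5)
Your proposal is correct and follows essentially the same route as the paper: part~(a) is deduced from Proposition~\ref{ClassicalFree} by equivariance of $\gr\wp$ on the cyclic generator (the paper simply says this is obvious), and part~(b) is exactly the paper's recursive normal-form argument, solving for $u=\exp(u_1t+u_2t^2+\cdots)$ order by order using that the adjoint action of the regular leading coefficient is invertible on $\fu$. You merely supply more detail than the paper does, including the bookkeeping for the localization at $\bar h_{i,N}$ and the pro-scheme remark.
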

\begin{proof}
(a) Follows obviously from the proposition.\\
(b) We need to check that every $A\in\tilde\fg^*$ can be
conjugated to an element of~$\tilde\fb^*$ by a unique element
of $\tilde U$. We look for such an element of $\tilde U$ in the
form $\exp(u_1t+u_2t^2+\ldots)$. We can find $u_i$'s one-by-one,
using the fact that the adjoint action of any element of
$\fh^r$ on $\fu$ is invertible,
cf.~\cite[Proposition~5]{FedorovIsoStokes}.
\end{proof}

We see that $\gr\wp$ is an isomorphism. So $\wp$ is an
isomorphism. This completes the proof of
Theorem~\ref{ThFreeField}.

\subsection{Proof of Theorem~\ref{ThEnd}}\label{PrfThEnd}
Let $\B_0\subset\B$ be the set of PBW monomials, which are
products of $a_{\alpha,n}$, with $0<n<N$, $a^*_{\alpha,n}$,
where $-N<n<0$ and $b_{i,n}$, with $-N\le n<N$. Set
$\U_1\hat\fh_k:=\U\hat\fh_k/\U\hat\fh_k\cdot(1-\mathbf{1})$.
Let $(\mathcal A\otimes\U_1\hat\fh_k)_+$ be the left ideal in
$\mathcal A\otimes\U_1\hat\fh_k$ generated by
$a_{\alpha,n}\otimes1$ with $n\ge N$, $a^*_{\alpha,n}\otimes1$
with $n\ge0$ and $1\otimes b_{i,n}$ with $n>N$.

It is easy to see that every $A\in\B_0$ is annihilated by
$(\mathcal A\otimes\U_1\hat\fh_{k-k_c})_+$. Let $\Lambda$ be
the $\Fun(\fh^{*,r})$-submodule of $M_N\otimes\pireg{k-k_c}$
generated by $\B_0$, it is easy to see that
\[
    \Lambda=(M_N\otimes\pireg{k-k_c})^{(\mathcal
    A\otimes\U_1\hat\fh_{k-k_c})_+}=
    \End_{\mathcal
    A\otimes\U_1\hat\fh_{k-k_c}}(M_N\otimes\pireg{k-k_c}).
\]
The last identity is proved in the same way as
Proposition~\ref{PrEnd}.

Thus $\Lambda\subset\End_{\hat\fg}(M_N\otimes\pireg{k-k_c})$.
Let us calculate the ring structure on $\Lambda$. Viewing
$a_{\alpha,n},a^*_{\beta,m}\in\B_0$ as endomorphisms and
recalling that by our convention they act on the right, we get
\begin{multline*}
    \vac'{[a_{\alpha,n},a^*_{\beta,m}]}_\Lambda=
    (a^*_{\beta,m}\vac')a_{\alpha,n}-
    (a_{\alpha,n}\vac')a^*_{\beta,m}=\\
    a^*_{\beta,m}a_{\alpha,n}\vac'-
    a_{\alpha,n}a^*_{\beta,m}\vac'=
    -\delta_{\alpha,\beta}\delta_{n,-m}\vac',
\end{multline*}
where ${[\cdot,\cdot]}_\Lambda$ is the commutator in the ring
$\Lambda$. Thus
\[
{[a_{\alpha,n},a^*_{\beta,m}]}_\Lambda=-\delta_{\alpha,\beta}\delta_{n,-m}.
\]
Similarly,
\begin{equation}\label{commrel2}
    {[b_{i,n},b_{j,m}]}_\Lambda=n(k-k_c)(h_i,h_j)\delta_{n,-m},
\end{equation}
and all the other commutators are zero. We want to identify
$\Lambda$ with the LHS of~(\ref{answer}). To this end we
identify $a^*_{\alpha,-n}$ with a coordinate system on $n$-th
copy of~$\fu^*$, we identify $a_{\alpha,n}$ with
$-\partial/\partial a^*_{\alpha,-n}$.

Next, for $0<n<N$, we identify $b_{i,n}$ with the function
$h_i$ on $n$-th copy of $\fh^*$, we identify $b_{i,N}$ with the
function $h_i$ on $\fh^{*,r}$, hence $b_{i,N}$ can occur in
negative powers. We want to identify $b_{i,-n}$ with a
differential operator on $n$-th copy of~$\fh^*$ corresponding
to a constant vector field. The commutation
relations~(\ref{commrel2}) show that
\begin{equation*}
    b_{i,-n}\mapsto
    -\frac{2n(k-k_c)}{(\alpha_i,\alpha_i)}\partial_{\alpha_i}.
\end{equation*}
Finally we identify $b_{i,0}$ with the function $h_i$ on the
``extra'' copy of $\fh^*$. This gives the required isomorphism.

\subsection{Case $N=1$}
According to the proof of Theorem~\ref{ThEnd}, the ring
$\Lambda$ in the case $N=1$ is generated by
$\vac\otimes1\mapsto\wp^{-1}(b_{i,-1}\vac')$,
$\vac\otimes1\mapsto\wp^{-1}(b_{i,0}\vac')$,
and $\vac\otimes1\mapsto\wp^{-1}(b_{i,1}^{\pm1}\vac')$, where
$i=1,\ldots,\rk\fg$. Let us calculate these $\wp$-preimages.
Let $\rho$ be the half-sum of positive roots.

\begin{proposition}
\noindstep $\wp(h_{i,1}\vac\otimes1)=b_{i,1}\vac'$\smcn\\
\noindstep $\wp((h_{i,0}-2\rho(h_i))\vac\otimes1)=b_{i,0}\vac'$\smcn\\
\noindstep $\wp(h_{i,-1}\vac\otimes1+\sum_{\alpha\in\Delta_+}
    \alpha(h_i)e_{\alpha,0}f_{\alpha,0}\vac\otimes
    h_{\alpha,1}^{-1})=b_{i,-1}\vac'$.
\end{proposition}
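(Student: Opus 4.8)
**

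The plan is to compute each of the three $\wp$-images directly by applying the free field realization formulae~(\ref{ffr}) to the vacuum vector $\vac' = \vac\otimes(\vac\otimes1)$ and simplifying, using repeatedly the vanishing relations~(\ref{vacuum}) for $M_N$ and the relations $b_{i,n}\vac=0$ for $n>N=1$.

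First I would dispose of parts (a) and (b), which are a warm-up. For (a): apply the middle formula in~(\ref{ffr}) to $h_i(z)$, extract the mode $h_{i,1}$, and note that each normally ordered term ${:}a^*_{\beta,\mu}a_{\beta,\nu}{:}$ with $\mu+\nu=1$ kills $\vac'$ (either $\nu\ge1=N$ so $a_{\beta,\nu}\vac=0$, or $\nu\le0$ hence $\mu\ge1>0$ so $a^*_{\beta,\mu}$ is to the right of $a_{\beta,\nu}$ after normal ordering and annihilates $\vac$); what survives is $b_{i,1}\vac'$, which is exactly the assertion since $\wp$ sends $\vac\otimes1$ to $\vac'$. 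For (b): extract the mode $h_{i,0}$ from the same formula. Now the terms ${:}a^*_{\beta,\mu}a_{\beta,\nu}{:}$ with $\mu+\nu=0$ do \emph{not} all vanish — the single term $\mu=\nu=0$, i.e. $a^*_{\beta,0}a_{\beta,0}$, is the number operator and contributes. Since $a^*_{\beta,0}a_{\beta,0}\vac' = a^*_{\beta,0}a_{\beta,0}\vac\otimes(\vac\otimes1)$; here one must be careful: $a^*_{\beta,0}$ does \emph{not} annihilate $\vac$ (only $a^*_{\beta,n}$ with $n\ge0$... wait, $n\ge 0$ includes $0$), so actually $a^*_{\beta,0}\vac=0$ as well by~(\ref{vacuum}). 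Hmm — then one should instead write ${:}a^*_{\beta,0}a_{\beta,0}{:}$ with $a_{\beta,0}$ already to the right, and $a_{\beta,0}\vac = 0$ since $0<N$? No: $a_{\alpha,n}\vac=0$ requires $n\ge N=1$. So $a_{\beta,0}\vac\ne0$ in general. Thus the $\mu=\nu=0$ term gives $a^*_{\beta,0}a_{\beta,0}\vac'$, and to evaluate it I commute: $a^*_{\beta,0}a_{\beta,0} = a_{\beta,0}a^*_{\beta,0} - [a_{\beta,0},a^*_{\beta,0}] = a_{\beta,0}a^*_{\beta,0} - 1$ (using $\delta_{0,-0}=1$). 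Since $a^*_{\beta,0}\vac=0$, this is $-\vac'$. Summing over $\beta\in\Delta_+$ with weight $-\beta(h_i)$ gives $+\sum_{\beta}\beta(h_i)\vac' = 2\rho(h_i)\vac'$; combined with $b_{i,0}\vac'$ this yields $h_{i,0}\vac' = b_{i,0}\vac' + 2\rho(h_i)\vac'$, i.e. $(h_{i,0}-2\rho(h_i))\vac' = b_{i,0}\vac'$, which is (b) after applying $\wp^{-1}$.

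The real work is part (c). I would extract the mode $h_{i,-1}$ from the middle formula of~(\ref{ffr}): $h_{i,-1}\vac' = -\sum_{\beta\in\Delta_+}\beta(h_i)\sum_{\mu+\nu=-1}{:}a^*_{\beta,\mu}a_{\beta,\nu}{:}\vac' + b_{i,-1}\vac'$. In the double sum, a term survives only when $a_{\beta,\nu}$ does not kill $\vac'$, i.e. $\nu\le0$, hence $\mu\ge-1$; and also $a^*_{\beta,\mu}$ on the left must not kill $\vac$ after normal ordering. The upshot should be that only the single pair $(\mu,\nu)=(-1,0)$ and possibly $(0,-1)$ contribute; writing these out and commuting past $\vac$ as in part (b), the nonvanishing contribution is $-\sum_\beta\beta(h_i)\,a^*_{\beta,-1}a_{\beta,0}\vac'$. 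Now the key is to recognize $a^*_{\beta,-1}a_{\beta,0}\vac'$ as $\wp$ of something in $\MMR$. Here I would use that $\wp$ is a $\hat\fg$-module isomorphism (Theorem~\ref{ThFreeField}(c)) and that $h_{\alpha,1}$ acts on $M_N\otimes\pireg{k-k_c}$ as $b_{\alpha,1}$ on $\vac'$; applying the formulae for $e_\alpha(z)$ and $f_\alpha(z)$ from Proposition~\ref{PrAllRoots}, one computes $e_{\alpha,0}f_{\alpha,0}\vac'$ and $f_{\alpha,0}e_{\alpha,0}\vac'$ in the free field model. The leading term of $f_{\alpha,0}\vac'$ is $b_{\alpha,?}a^*_{\alpha,?}$-type plus $a^*$-derivative terms; then $e_{\alpha,0}$ acting brings down an $a_{\alpha,0}$ from $a_\alpha(z)$. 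Matching degrees (the $I$-weight bookkeeping from the filtration section) pins down which modes appear. I expect the outcome to be $e_{\alpha,0}f_{\alpha,0}\vac' = b_{\alpha,1}a^*_{\alpha,-1}a_{\alpha,0}\vac' + (\text{terms in } \Lambda' \text{ killed or absorbed})$, so that $h_{\alpha,1}^{-1}e_{\alpha,0}f_{\alpha,0}\vac' = a^*_{\alpha,-1}a_{\alpha,0}\vac'$ after inverting $b_{\alpha,1}$ in $\pireg{k-k_c}$. Substituting back gives $h_{i,-1}\vac' = b_{i,-1}\vac' - \sum_\alpha\alpha(h_i)\,\wp\!\left(e_{\alpha,0}f_{\alpha,0}\,\vac\otimes h_{\alpha,1}^{-1}\right)$, which rearranges to the claimed identity.

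The main obstacle is exactly this last matching step: correctly evaluating $e_{\alpha,0}f_{\alpha,0}\vac'$ in the free field realization. The formula for $f_\alpha(z)$ in Proposition~\ref{PrAllRoots} has four groups of terms (the ${:}Q a{:}$ part, the $\tilde Q\,\partial a^*$ part, the $b_\alpha a^*_\alpha$ part, and the $\sum_i b_i R_i^\alpha$ part), and one must check that after applying $e_{\alpha,0}$ and then projecting onto the $\Lambda$-component $a^*_{\alpha,-1}a_{\alpha,0}\vac'$, all terms except the one coming from $b_\alpha(z)a^*_\alpha(z)$ paired against $a_\alpha(z)$ either vanish on $\vac'$, lie in lower $I$-filtration, or are absorbed by the localization. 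The $\fh$-weight constraint ($w_\fh$ must be zero for $e_{\alpha,0}f_{\alpha,0}\vac'$) together with the degree grading should make this manageable, but it requires care with the constants $c_i$, $(k-k_c)(e_i,f_i)$ appearing in~(\ref{ffr}) — one should verify these drop out or contribute only to terms that are annihilated. I would organize this by first computing $f_{\alpha,0}\vac'$ explicitly (only finitely many modes survive on $\vac'$), then applying $e_{\alpha,0}$ term by term.
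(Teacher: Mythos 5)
Your plan follows the paper's proof essentially step for step: extract the modes $h_{i,1}$, $h_{i,0}$, $h_{i,-1}$ from the middle formula of~(\ref{ffr}), evaluate on $\vac'$, and in part (c) recognize the surviving term $-\sum_{\alpha}\alpha(h_i)a^*_{\alpha,-1}a_{\alpha,0}\vac'$ as $-\sum_\alpha\alpha(h_i)\wp(e_{\alpha,0}f_{\alpha,0}\vac\otimes h_{\alpha,1}^{-1})$ by first showing $f_{\alpha,0}\vac'=b_{\alpha,1}a^*_{\alpha,-1}\vac'$ and then applying $e_{\alpha,0}$. Parts (a) and (b) are complete and correct, and the $(\mu,\nu)=(0,-1)$ pair you hedge on in (c) in fact dies, since $a^*_{\beta,0}$ and $a_{\beta,-1}$ commute and $a^*_{\beta,0}\vac'=0$. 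The one substantive caveat concerns part (c): you correctly identify the evaluation of $e_{\alpha,0}f_{\alpha,0}\vac'$ as the main obstacle but leave it unexecuted, and one of the escape routes you offer --- that extraneous terms might merely ``lie in lower $I$-filtration or be absorbed by the localization'' --- would not suffice, because the claimed identity is exact, not an identity modulo lower order. In the paper every extraneous term of~(\ref{ffralpha}) is shown to vanish identically on $\vac'$: the ${:}Q^\alpha_\beta a_\beta{:}$ and $\tilde Q^\alpha_\beta\partial_za^*_\beta$ terms by degree and normal-ordering arguments together with an $\fh$-weight argument excluding the potential survivor ${:}a^*_{\gamma,0}a_{\beta,0}{:}$; the $b_iR_i^\alpha$ term because $R_i^\alpha$ has no constant or linear term; and, after applying $e_{\alpha,0}$, the cross terms with $\beta>\alpha$ because $a_{\beta,1}$ commutes with $a^*_{\alpha,-1}$. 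Your term-by-term plan would reproduce exactly these checks, so the approach is sound; only the decisive verification is missing.
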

\begin{proof}
Part~(a) follows from Lemma~\ref{HinBin}. For part~(b) we have
\[
h_{i,0}\vac'=-\sum_{\alpha\in\Delta_+}
\alpha(h_i){:}a^*_{\alpha,0}a_{\alpha,0}{:}\vac'+b_{i,0}\vac'.
\]
But by our definition of normal ordering we have:
\[
{:}a^*_{\alpha,0}a_{\alpha,0}{:}\vac'=
a^*_{\alpha,0}a_{\alpha,0}\vac'=
-\vac'+a_{\alpha,0}a^*_{\alpha,0}\vac'=-\vac',
\]
and the statement follows.

Let us prove part~(c). We have
\begin{equation}\label{start}
    h_{i,-1}\vac'=-\sum_{\alpha\in\Delta_+}
    \alpha(h_i)a_{\alpha,-1}^*a_{\alpha,0}\vac'+b_{i,-1}\vac'.
\end{equation}
Further,
\begin{multline}\label{terms}
     f_{\alpha,0}\vac'=\sum_{\beta\in\Delta_+}
{:}(Q_\beta^\alpha(a^*(z))a_\beta(z))_{(0)}{:}\vac'+
\sum_{\beta\in\Delta_+}
(\tilde Q_\beta^\alpha(a^*(z))\partial_z a^*_\beta(z))_{(0)}\vac'+\\
     (b_\alpha(z)a^*_\alpha(z))_{(0)}\vac'
     +\sum_i (b_i(z)R_i^\alpha(a^*(z)))_{(0)}\vac'.
\end{multline}
The second terms vanishes: indeed, for a monomial $A$ in
$a^*_{\gamma,\mu}$ of degree zero it is clear that $A\vac'=0$.

Let us show that the first term vanishes. Consider the
expression ${:}Aa_{\beta,\nu}{:}\vac'$, where $A$ is a monomial
in $a^*_{\gamma,\mu}$ of degree $\nu$. If $\nu>0$, we clearly
get zero. If~$\nu<0$, then $A$ contains $a^*_{\gamma,\mu}$ with
$\mu>0$ and the term vanishes. If $\nu=0$, and~$A$ contains
$a^*_{\gamma,\mu}$ with $\mu>0$, we again get zero. Thus the
only interesting case is when $A=\prod a^*_{\gamma,0}$. If
there are at least two multiples in $A$ we still get zero. In
the remaining case:
\[
    {:}a^*_{\gamma,0}a_{\beta,0}{:}\vac'=
    -\delta_{\gamma,\beta}\vac'=0,
\]
since $\gamma=\beta$ is impossible due to $\fh$-weight
considerations.

The term $b_{i,\mu}R_i^\alpha(a^*(z))_{(-\mu)}$ vanishes if
$\mu>1$ and if $\mu\le 0$. It also vanishes when $\mu=1$
because any monomial in $R_i^\alpha(a^*(z))_{(-1)}$ has at
least two terms, and one of them has to kill $\vac'$. The term
$b_{\alpha,\mu}a^*_{\alpha,-\mu}$ vanishes unless $\mu=1$. Thus
$f_{\alpha,0}\vac'=b_{\alpha,1}a^*_{\alpha,-1}\vac'$ and
$\wp(f_{\alpha,0}\vac\otimes
h_{\alpha,1}^{-1})=a^*_{\alpha,-1}\vac'$.

Let us calculate
\begin{multline}\label{}
    \wp(e_{\alpha,0}f_{\alpha,0}\vac\otimes
    h_{\alpha,1}^{-1})=e_{\alpha,0}a^*_{\alpha,-1}\vac'=\\
    a_{\alpha,0}a^*_{\alpha,-1}\vac'+\sum_{\substack{\beta\in\Delta_+\\ \beta>\alpha}}
    \sum_\nu{:}P_\beta^\alpha(a^*(z))_{(-\nu)}a_{\beta,\nu}{:}a^*_{\alpha,-1}\vac'.
\end{multline}
For $\nu\ne1$, the last term vanishes for the same reasons as
the first term in~(\ref{terms}). For $\nu=1$ it vanishes
because $\beta>\alpha$, so that $a_{\beta,1}$ and
$a^*_{\alpha,-1}$ commute.

Thus $\wp(e_{\alpha,0}f_{\alpha,0}\vac\otimes
h_{\alpha,1}^{-1})=a_{\alpha,0}a^*_{\alpha,-1}\vac'$. Combining
with~(\ref{start}) we get the proposition.
\end{proof}

\subsubsection{Right version}\label{RIGHTCASIMIR} Applying the involution $\iota$
we see that $\Lambda^r\subset\End_{\hat\fg}(\mathbb
M_{1,k,reg}^r)$ is generated by
\[
\begin{split}
\hat b_{i,1}:1\otimes\vac\mapsto&1\otimes\vac h_{i,1},\\
\hat b_{i,0}:1\otimes\vac\mapsto&1\otimes\vac h_{i,0},\\
\hat b_{i,-1}:1\otimes\vac\mapsto&1\otimes\vac h_{i,-1}+\sum_{\alpha\in\Delta_+}
 \alpha(h_i)h_{\alpha,1}^{-1}\otimes\vac f_{\alpha,0}e_{\alpha,0}.
\end{split}
\]
The ring structure is given by
\[
[\hat b_{i,-1},\hat b_{j,1}]=(-k-k_c)(h_i,h_j),
\]
the other commutators are zero. The isomorphism
$\End_{\hat\fg}(\mathbb
M_{1,k,reg}^r)\approx\Dmod(\fh^{*,r})\otimes\Fun(\fh^*)$ is
given by $\hat b_{i,1}\mapsto h_i$, $\hat b_{i,-1}\mapsto-
\frac{2(k+k_c)}{(\alpha_i,\alpha_i)}\partial_{\alpha_i}$.

Identifying $\fh^*$ with $\fh$ via the bilinear form, we get
\begin{equation}\label{conn}
\hat b_{i,1}\mapsto2\alpha_i/(\alpha_i,\alpha_i),\qquad \hat
b_{i,-1}\mapsto(-k-k_c)\partial_{h_i}.
\end{equation}

\subsection{Proof of Theorem~\ref{ThCasimir}}
By PBW Theorem the space of coinvariants $V/\fgp\cdot V$ is
naturally isomorphic to
$\Sym(t\fh)\otimes\V=\Fun(\fh)\otimes\V$, where we use the
bilinear form to identify $t\fh$ and $\fh^*$. Note that under
this identification $h_{\alpha,1}$ is identified with
$2\alpha/(\alpha,\alpha)$. Localizing we get
$\F(V)=\Fun(\fh^r)\otimes\V$.

Denote the connection given by the $\Dmod$-module structure on
$\F(V)$ by $\nabla$. Set $\hbar:=\frac{-1}{2(k+k_c)}$.
By~(\ref{conn}) we have
\[
    \nabla_{h_i}(\phi\otimes v)=2\hbar\hat b_{i,-1}(\phi\otimes v),
\]
where $\phi\otimes v\in\Fun(\fh^r)\otimes\V$. To calculate the
connection it is enough to calculate
\[
\hat b_{i,-1}(1\otimes v)=
h_{i,-1}(1\otimes v)+\sum_{\alpha\in\Delta_+}
\alpha(h_i)\frac{(\alpha,\alpha)}{2\alpha}\otimes
f_{\alpha,0}e_{\alpha,0}v.
\]
Note that $h_{i,-1}v=0$. Using the Leibnitz rule we get
\[
\nabla_{h_i}(\phi\otimes v)=\partial_{h_i}\phi\otimes v+
2\hbar\sum_{\alpha\in\Delta_+}\frac{\alpha(h_i)}{\alpha}\phi
\otimes\frac{(\alpha,\alpha)}2f_\alpha e_\alpha v.
\]
Switching to differential forms we re-write:
\[
\nabla=d+2\hbar\sum_{\alpha\in\Delta_+}
\frac{d\alpha}{\alpha}
\left(\frac{(\alpha,\alpha)}2f_\alpha e_\alpha\right).
\]
Let us compare it with the Casimir
connection~\cite[\S2]{ToledanoLaredo0}
\[
\nabla_{Casimir}=d+\hbar\sum_{\alpha\in\Delta_+}\frac{d\alpha}{\alpha}
\left(\frac{(\alpha,\alpha)}2(e_\alpha f_\alpha+f_\alpha e_\alpha)\right)=
\nabla+\hbar\sum_{\alpha\in\Delta_+}\frac{d\alpha}{\alpha}
\frac{(\alpha,\alpha)}2h_\alpha.
\]
Now recall that $\F(V)$ is a sheaf on $\fh^r\times\fh^*$ with a
connection along $\fh^r$. Let $\cL$ be the trivial line bundle
on $\fh^r\times\fh^*$ with a connection along $\fh^r$ given by
\[
    \nabla_0=d+\hbar\sum_{\alpha\in\Delta_+}\frac{d\alpha}{\alpha}
    \frac{(\alpha,\alpha)}2h_\alpha,
\]
where $h_\alpha$ is now viewed as a function on $\fh^*$. We see
that
\[
    (\F(V),\nabla_{Casimir})=(\F(V),\nabla)\otimes(\cL,\nabla_0).
\]
and the Theorem is proved.
\begin{remark}
We have used a version of the Casimir connection with truncated
$\fsl_2$ Casimir operators $\frac{(\alpha,\alpha)}2(e_\alpha
f_\alpha+f_\alpha e_\alpha)$. If we use the usual Casimir
operators $\frac{(\alpha,\alpha)}2(e_\alpha f_\alpha+f_\alpha
e_\alpha+\frac12h_\alpha^2)$ (cf.~\cite[\S5]{ToledanoLaredo}
and~\cite{ToledanoLaredo2}) instead, then we get similar
results with
\[
    \nabla_0=d+\hbar\sum_{\alpha\in\Delta_+}\frac{d\alpha}{\alpha}
    \frac{(\alpha,\alpha)}2\left(h_\alpha+\frac12h_\alpha^2\right).
\]
\end{remark}

\bibliographystyle{alphanum}
\bibliography{Casimir}

\end{document}